\documentclass[english,12pt,oneside]{amsproc}
\usepackage[english]{babel}
\usepackage{a4wide}
\usepackage{amsthm}
\usepackage{graphics}
\usepackage{amsfonts, amssymb, amscd, amsmath}
\usepackage{mathdots}
\usepackage{latexsym}
\usepackage[matrix,arrow,curve]{xy}
\usepackage{mathabx}
\usepackage{color}
\usepackage{pbox}
\usepackage{tikz}
\usetikzlibrary{matrix,decorations.pathreplacing,positioning}
\usepackage{hyperref}
\usepackage{subcaption}
\usepackage{enumitem}
\usepackage{ifthen}
\usepackage{neuralnetwork} 
\usepackage{tikz}
\def\acts{\curvearrowright}

\DeclareMathOperator{\Hom}{Hom}

\DeclareMathOperator{\pf}{pf}

\def\G{\Gamma}

\newcounter{stmcounter}[section]

\numberwithin{equation}{section}

\theoremstyle{plain}
\newtheorem{cor}[stmcounter]{Corollary}
\newtheorem{stm}[stmcounter]{Statement}
\newtheorem{thm}[stmcounter]{Theorem}

\newtheorem{prop}[stmcounter]{Proposition}
\newtheorem{lem}[stmcounter]{Lemma}

\theoremstyle{definition}
\newtheorem{defin}[stmcounter]{Definition}

\theoremstyle{remark}
\newtheorem{ex}[stmcounter]{Example}
\newtheorem{rem}[stmcounter]{Remark}
\newtheorem{con}[stmcounter]{Construction}

\subjclass[2020]{Primary 15B57, 57R91, 57S12, 14M15, 55N91; Secondary 57R19, 15A18, 57S25, 05C25, 05C76, 15B30, 17B99} 
\keywords{Isospectral skew-symmetric matrices; torus action; invariant submanifold; GKM theory; isospectral space; matrix spectrum; equivariant cohomology} 

\begin{document}

\title{GKM Theory for Manifolds of Isospectral Matrices in Lie Type D}

\author{Evgeny Zhukov}

\begin{abstract}
We study the manifold $Q_{\G, \lambda}$ of isospectral real skew-symmetric matrices with a prescribed sparsity pattern determined by a graph $\G$. The compact torus $T^n$ acts naturally on $Q_{\Gamma,\lambda}$ by conjugation, and this action can be studied using GKM theory. We prove two results about this manifold and its GKM graph. The first theorem describes how the GKM graph of $Q_{\G, \lambda}$
 is obtained from the GKM graph of the corresponding manifold $M_{\G, \lambda}$ of isospectral Hermitian matrices. The second theorem gives a criterion for equivariant formality of 
$Q_{\G, \lambda}$.
\end{abstract}

\maketitle


\section{Introduction}\label{secIntro}
Let $\G$ be a simple connected graph on the vertex set $V_{\Gamma} = [n] = \{1, \ldots, n\}$ with edge set $E_{\Gamma}$. To this graph we associate two manifolds (see Constructions~\ref{def_q} and \ref{def_m}): the manifolds $M_{\Gamma,\lambda}$ and $Q_{\Gamma,\lambda}$ of $\G$-shaped Hermitian and skew-symmetric matrices, respectively, with a fixed generic spectrum $\lambda$. Both manifolds carry natural actions of the compact $n$-dimensional torus $T^n$ by conjugation. We call these spaces isospectral matrix manifolds.

Manifolds $M_{\G, \lambda}$ for particular classes of graphs were studied in \cite{A1}, \cite{Arrow}, \cite{Tridiag}. In \cite{A2}, a general approach to $M_{\G, \lambda}$ based on computational topology was proposed. It was shown that these manifolds admit a useful combinatorial description; the associated GKM graph (equivalently, the 1-skeleton of the torus action) plays a key role.

It was noted in \cite{A1} that $M_{\G, \lambda}$ corresponds to semisimple Lie type A, and that the general setting can be extended to other Lie types. In the present paper we study the manifold $Q_{\G, \lambda}$, which corresponds to Lie type D. An explicit description of its GKM graph is given in Proposition~\ref{last}. Using this result together with results from \cite{A1}, we prove Theorem~\ref{Th}.

\begin{thm}\label{Th}
    GKM graph of $Q_{\G, \lambda}$ is can be obtained by Construction~\ref{graph}.
\end{thm}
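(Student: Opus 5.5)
The plan is to compute the GKM graph of $Q_{\G,\lambda}$ directly: first its fixed points, then its one-dimensional orbits with their tangent weights. Then we match each ingredient against the corresponding piece of Construction~\ref{graph}, which is built from the GKM graph of $M_{\G,\lambda}$ described in \cite{A1}.

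The natural realization is the following. A real skew-symmetric $2n\times 2n$ matrix with spectrum $\pm i\lambda_1,\dots,\pm i\lambda_n$ is block-shaped, with $2\times 2$ blocks indexed by the vertices of $\G$. Its entries lie in $\mathfrak{so}(2n)$, and the $T^n$-action is conjugation by the maximal torus $SO(2)^n$.

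\textbf{Step 1: fixed points.} A fixed point must commute with the torus, so it is block-diagonal with $2\times 2$ blocks $\lambda_{\sigma(k)}\varepsilon_k J$. Here $\sigma$ is a permutation of $[n]$, the $\varepsilon_k=\pm1$ are signs, and $J$ is the standard rotation generator. The Pfaffian is fixed by the spectrum, so $\prod\varepsilon_k$ is determined. The fixed points are therefore the signed permutations with fixed sign product, i.e. the Weyl group of type $D_n$. This should match the vertex set produced by Construction~\ref{graph}: each vertex $\sigma$ of the type A graph of $M_{\G,\lambda}$ is replaced by $2^{n-1}$ sign patterns.

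\textbf{Step 2: tangent weights.} At a fixed point, the tangent space is the sum of the off-diagonal $2\times 2$ blocks $(k,l)$ with $\{k,l\}\in E_\G$. As a real $T^n$-representation, $\mathfrak{so}(4)$ restricted to the off-diagonal block splits into two irreducible 2-planes with weights $t_k-t_l$ and $t_k+t_l$. The tangent space of the isospectral orbit at a fixed point is the image of $\mathrm{ad}$ of the diagonal matrix. On these two planes, $\mathrm{ad}$ acts with eigenvalues $\pm(\varepsilon_k\lambda_{\sigma(k)}\mp\varepsilon_l\lambda_{\sigma(l)})$, which are nonzero for generic $\lambda$. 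So each edge $\{k,l\}$ of $\G$ contributes two weights, $\pm e_k\pm e_l$ up to sign, and these are pairwise linearly independent. This gives the GKM condition at each vertex, with valence $2|E_\G|$, double the valence of $M_{\G,\lambda}$.

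\textbf{Step 3: invariant 2-spheres.} For each edge and each of the two weights, we exponentiate the corresponding root $SO(2)$-subgroup, which lies inside the $SO(4)$ acting on the coordinates of blocks $k$ and $l$. This gives an invariant sphere joining the fixed point to a second fixed point.
\begin{itemize}
\item For the weight $e_k-e_l$ type of move, the second endpoint is obtained by swapping the values in positions $k$ and $l$ while keeping the signs. This reproduces the edge of the $M_{\G,\lambda}$ graph, mirroring \cite{A1}.
\item For the $e_k+e_l$ type, the swap is combined with flipping both signs.
\end{itemize}
One must check that these spheres stay inside the $\G$-shaped locus. This holds because the subgroup only mixes blocks $k$ and $l$ and $\{k,l\}\in E_\G$. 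This identification should be exactly Proposition~\ref{last}, and comparing it with Construction~\ref{graph} yields the theorem.

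The main obstacle I expect is a careful sign and labelling bookkeeping in Step 3. We need to verify that the conjugation by the $e_k+e_l$ root subgroup really lands at the predicted signed permutation, and that the axial function labels agree with the construction's conventions. These conventions concern the orientation of weights and the parity constraint from the Pfaffian. I would handle this by reducing to the explicit $n=2$ case in $\mathfrak{so}(4)\cong\mathfrak{so}(3)\oplus\mathfrak{so}(3)$. There the two families correspond to the two $\mathfrak{so}(3)$ factors, each giving an $S^2$.
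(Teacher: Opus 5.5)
Your route is essentially the paper's: fixed points, then the planes $U^{\pm}_{ij}$ with weights $\epsilon_i\pm\epsilon_j$, then one invariant sphere per weight inside the $(i,j)$ block, then comparison with Proposition~\ref{Her}. Building the spheres as orbits of rank-one subgroups of the $SO(4)$ acting on blocks $k,l$ is cleaner than the paper's characteristic-polynomial check. (Strictly, exponentiating a root plane gives a circle; the sphere is the orbit of the $SU(2)$ generated by that plane together with $T$.)

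However, Step~1 is wrong. The spectrum $\{\pm i\lambda_k\}$ fixes only $\det=\pf^2$, not $\pf$. As defined, $Q_{\G,\lambda}=Q^+_{\G,\lambda}\sqcup Q^-_{\G,\lambda}$, so all $2^n$ sign vectors occur. Construction~\ref{graph} accordingly puts $2^n$ vertices over each $\sigma$, not $2^{n-1}$, and your count describes only one component (the graph $\eta_+$ or $\eta_-$ of the paper's corollary). The fix is to allow every $s\in\mathbb{Z}_2^n$ and note that each invariant sphere preserves $\sum_k s_k \bmod 2$.

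The bookkeeping you postpone is where the content lies, and it does not reproduce Construction~\ref{graph} literally. Take $A_{\sigma,s}$ as in Proposition~\ref{need}.
\begin{itemize}
\item Your $\epsilon_k-\epsilon_l$ sphere (off-diagonal block in $U^-_{kl}$) carries the signed values along. So it ends at $A_{\sigma\cdot(k,l),s'}$ with $(s'_k,s'_l)=(s_l,s_k)$.
\item The $\epsilon_k+\epsilon_l$ sphere ends at $(s'_k,s'_l)=(s_l+1,s_k+1)$.
\end{itemize}
These are exactly the two $s'$ the construction allows, so vertices and edges match. The labels, however, agree with the rule ``$\epsilon_k+\epsilon_l$ iff $s_k=s'_k$'' only when $s_k\neq s_l$; when $s_k=s_l$ they are swapped.

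This already happens for $\G=K_2$. The edge from $A_{\mathrm{id},(0,0)}$ to $A_{(12),(0,0)}$ is the $U(2)$-orbit, i.e.\ the sphere of case~(\ref{first}), tangent to $U^-_{12}$. Its weight is therefore $\epsilon_1-\epsilon_2$, whereas the construction assigns $\epsilon_1+\epsilon_2$.

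So your argument proves the statement with the corrected rule ``$\alpha=\epsilon_k-\epsilon_l$ iff $s'_k=s_l$''. Your final step, ``comparing with Construction~\ref{graph} yields the theorem'', does not go through as written. The paper has the same internal inconsistency. Its sphere theorem attaches the $U^-$-type block to $s_i=s'_i$, and its $U^{\pm}$ proposition gives $U^-$ the weight $\epsilon_i-\epsilon_j$. Yet Proposition~\ref{last} assigns $\epsilon_i+\epsilon_j$ in that case.
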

    GKM graph of $M_{\G, \lambda}$ has a clear connection to GKM graph of $Q_{\G, \lambda}$. This  is also stated in Construction~\ref{graph}. GKM graph of $M_{\G, \lambda}$ is described in Section~\ref{M_GKM}, which is concluded by Proposition~\ref{Her}.

Another important question is whether a $T$-manifold $X$ is equivariantly formal. Equivariant formality ensures that the combinatorial data encoded by the GKM graph suffices to determine the cohomology and equivariant cohomology of $X$(see, e.g., \cite{A1}). Many varieties with algebraic torus actions in complex projective geometry are equivariantly formal by the Białynicki–Birula decomposition \cite{BB}. On the other hand, toric topology provides examples of torus actions that are not equivariantly formal, together with methods for computing their cohomology. It was shown in \cite{Ay1} that manifolds of isospectral Hermitian matrices of certain sparsity types are not equivariantly formal. Thus, it is natural to ask when matrix manifolds such as $Q_{\G, \lambda}$ are equivariantly formal. The most general result for manifolds of Hermitian matrices was proved in \cite{A2}, however there were no results about other natural types of matrices. The present paper aims to partially fill this gap. The following theorem provides a criterion for equivariant formality of $Q_{\G, \lambda}$.

\begin{thm} \label{main}
    $Q_{\Gamma, \lambda}$ is equivariantly formal if and only if $M_{\Gamma, \lambda}$ is equivariantly formal.
\end{thm}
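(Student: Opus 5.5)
A natural route is through the standard criterion for equivariant formality of GKM-type manifolds. For a compact $T$-manifold $X$ with isolated fixed points, $X$ is equivariantly formal if and only if $\dim H^*(X;\mathbb{Q}) = |X^T|$, that is, the total Betti number equals the number of fixed points. Equivalently, in the setting of \cite{A2}, it holds if and only if $H^{odd}(X)=0$. I would apply this criterion to both $M_{\G,\lambda}$ and $Q_{\G,\lambda}$.

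\textbf{Step 1: fixed points.} Using Theorem~\ref{Th} and Construction~\ref{graph}, identify the vertices of the GKM graph of $Q_{\G,\lambda}$ in terms of those of $M_{\G,\lambda}$. The fixed points of $M_{\G,\lambda}$ are the diagonal matrices with spectrum $\lambda$, indexed by $S_n$. The fixed points of $Q_{\G,\lambda}$ are block-diagonal matrices with $2\times 2$ rotation blocks, indexed by signed permutations, possibly modulo a parity condition in type D. I would record that $|Q^T| = c\cdot|M^T|$ for a fixed constant $c$ determined by the construction.

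\textbf{Step 2: relate the cohomology.} This is the main obstacle, because it requires a geometric comparison rather than a purely combinatorial one. I would look for a $T$-equivariant fibration or stratification relating the two spaces. A skew-symmetric $2n\times 2n$ matrix with spectrum $\pm i\lambda$ and $\G$-shaped $2\times2$ blocks can be complexified: identifying $\mathbb{R}^{2n}\cong\mathbb{C}^n$, the $J$-commuting part yields a Hermitian $\G$-shaped matrix in $M_{\G,\lambda}$. Roughly, I would try to exhibit $Q_{\G,\lambda}$ as a bundle over $M_{\G,\lambda}$, or a union of such pieces, whose fibre is a product of equivariantly formal spaces (points or spheres) carrying no odd cohomology. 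Failing that, I would compare the Atiyah--Bredon/Chang--Skjelbred sequences via the GKM graphs. Construction~\ref{graph} should show that the graph of $Q$ is a ``doubling'' of that of $M$, with the same local edge structure at corresponding vertices. It would then follow that the faces, i.e.\ the invariant submanifolds of the relevant subtori, of $Q$ are built from faces of $M$ in the same pattern.

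\textbf{Step 3: conclude.} Once the bundle or local-product structure is established, the Leray--Serre spectral sequence with fibre cohomology concentrated in even degrees gives $\dim H^*(Q) = c\cdot\dim H^*(M)$, provided the sequences degenerate. Degeneration holds in the direction ``$M$ formal $\Rightarrow$ $Q$ formal'' by parity. Combined with Step 1, this yields the implication from the formality of $M$ to that of $Q$.

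For the converse, I would argue the contrapositive. If $M$ is not formal, take the obstruction given in \cite{A2}: a face of the GKM graph, or an invariant submanifold, with odd cohomology, or equivalently a graph cycle condition that fails. By Construction~\ref{graph} this face lifts to a corresponding face of $Q$. Since invariant submanifolds of equivariantly formal spaces with the GKM property are again equivariantly formal, $Q$ cannot be formal.

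The key point to verify is that the criterion of \cite{A2} for $M_{\G,\lambda}$ is phrased purely in terms of the GKM graph. If it is, Construction~\ref{graph} transports it verbatim to $Q$, and the theorem follows.
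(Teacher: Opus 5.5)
Your formality criterion (with isolated fixed points, $X$ is equivariantly formal iff $\dim H^*(X;\mathbb{Q})=|X^T|$ iff $H^{odd}(X;\mathbb{Q})=0$) and the count $|Q_{\G,\lambda}^T|=2^n\,|M_{\G,\lambda}^T|$ are fine. However, neither implication is actually established.

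\textbf{$Q_{\G,\lambda}$ formal $\Rightarrow$ $M_{\G,\lambda}$ formal.} You rely on the principle that invariant submanifolds of an equivariantly formal GKM manifold are equivariantly formal. This is false. $T^2$ acting on $\mathbb{C}P^2$ by $[z_0:t_1z_1:t_2z_2]$ is GKM and formal, but the invariant $S^3=\{z_0=1,\ |z_1|^2+|z_2|^2=1\}$ has no fixed points and nonzero cohomology, so it is not formal. The correct statement is Lemma~\ref{Pan}, which holds only for components of $X^H$ with $H\subset T$ closed.

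Restricting to such faces does not rescue the argument. The fixed component of the same coordinate subtorus in $Q_{\G,\lambda}$ is again a type D manifold $Q_{\G',\lambda'}$ for an induced subgraph $\G'$. You would still need an independent proof that it is non-formal, and that is the theorem itself for $\G'$. Likewise, a criterion from \cite{A2} cannot be ``transported verbatim'': it is a theorem about $M_{\G,\lambda}$, and equivariant formality is not determined by the GKM graph alone.

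The missing idea is the one the paper uses. Put $J_2=\left(\begin{smallmatrix}0&1\\-1&0\end{smallmatrix}\right)$ and $J=\mathrm{diag}(J_2,\dots,J_2)$. The diagonal circle $\Delta=\{(t,\dots,t)\}\subset T^n$ acts on $Q_{\G,\lambda}$ by conjugation with $e^{\phi J}$. Hence its fixed set consists exactly of the $J$-commuting matrices, i.e.\ realifications of $iH$ with $H$ Hermitian and $\G$-shaped. One component of this fixed set is a copy of $M_{\G,\lambda}$ with its torus action (Proposition~\ref{my}). Lemma~\ref{Pan} then gives the implication at once, with no input from \cite{A2}. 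Your ``$J$-commuting part'' is exactly this locus, but it must be used as the inclusion of a fixed-point component, not as a projection.

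\textbf{$M_{\G,\lambda}$ formal $\Rightarrow$ $Q_{\G,\lambda}$ formal.} Steps 2--3 describe a search, not an argument. The map you suggest does not exist, because the $J$-commuting part of $A\in Q_{\G,\lambda}$ is not isospectral to $A$. Already for $\G=K_2$, consider the invariant sphere from $\mathrm{diag}(\lambda_1J_2,\lambda_2J_2)$ to $\mathrm{diag}(-\lambda_2J_2,-\lambda_1J_2)$. Its off-diagonal block anticommutes with $J_2$, so along it the $J$-commuting part is $\mathrm{diag}(xJ_2,(x-\lambda_1+\lambda_2)J_2)$ with $x$ running from $\lambda_1$ to $-\lambda_2$, and this spectrum is not constant.

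Even for $\G=K_n$ the natural fibration goes the other way: $SO(2n)/T\to SO(2n)/U(n)$ with fibre $U(n)/T\cong M_{K_n,\lambda}$. For other $\G$ its fibres stop being copies of $M_{\G,\lambda}$, because the $\G$-shape is not $SO(2n)$-stable.

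The paper uses a different mechanism. It takes a generalized Toda flow of type D on $Q_{\G,\lambda}$ (following \cite{Toda} and \cite{A-B}), which is a gradient flow whose critical points all have even index. The Morse complex then has zero differential, so $H^{odd}(Q_{\G,\lambda})=0$. The paper only sketches this argument. Still, an input of this kind --- a perfect Morse function, an even-cell decomposition, or an actual fibration with even base and fibre --- is precisely what your Step 2 lacks.
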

\par



The paper is organized as follows. Section 2 reviews background on torus actions, equivariant cohomology, and GKM theory. Section 3 defines the manifolds $M_{\G, \lambda}$ and $Q_{\G, \lambda}$ and states Theorem~\ref{Th}.
Section 4 proves Theorem~\ref{Th} by describing the GKM graphs of both manifolds (Propositions~\ref{Her} and \ref{last}). Section 5 proves Theorem~\ref{main} using a result of Masuda and Panov \cite{MasPan}.

\section{Toric topology}
This section reviews definitions and constructions from toric topology and GKM theory used in the paper. We follow the standard references \cite{Ay1}, \cite{A1}.

\subsection{Basics of group actions} We first recall the basic terminology
of group actions. Let $\phi\: :\:  G \times X \xrightarrow[]{} X$ be a $G$-action on $X$ and $x \in X$. In this paper, we always consider group actions under the smooth category. So we can define
the following representation from $\phi$: 
$$
    \Phi \: : \: G \xrightarrow{} \text{Diff($X$)} 
$$
by
$$\Phi(g) = \phi(g, \cdot),$$
where the symbol Diff($X$) represents the set of all diffeomorphisms
on X and it has the compact-open topology. If $\Phi$ is injective, i.e.,
$ker\: \Phi = \{e\}$ then we call this action $\phi$  an \textit{effective action}.
In our notation, the symbol $G(x)$ represents the \textit{$G$-orbit} of $x$ and $G_{x}$ represents the \textit{stabilizer subgroup} of $G$ on $x$, i.e.,
$$G(x) = \{\phi(g, x) \in X \:| \:g \in G\}$$ and
$$G_{x} = \{g \in G \: | \: \phi(g, x) = x\},$$
respectively. The symbol $X/G$ denotes the \textit{orbit space of $G$-action on $X$}, i.e, the set of all orbits equiped with the usual quotient topology.

\subsection{Tangent representation}
Let a compact torus $T$, dim $T = k$, act on a smooth compact closed orientable connected manifold $X$ of real dimension $2m$. We will assume the set $X^T$ of fixed points of the action is nonempty and finite. In this case it is said that the action has isolated fixed points. The abelian group $N =$ Hom$(T,S^1) \cong \mathbb{Z}^k$ is called the lattice of weights.

\begin{con} \label{tang}

If $x \in X^T$ is an isolated fixed point, there is an induced representation of $T$ in the tangent space $T_x X$ called the tangent representation. Let $\alpha_{x,1}, \ldots, \alpha_{x,m} \in$ Hom$(T, S^1) \cong \mathbb{Z}^k$ be the weights of the tangent representation at $x$, which means that
$$ T_{x}X \cong V(\alpha_{x, 1})\oplus \cdots \oplus V(\alpha_{x, m}),
$$
where $V(\alpha)$ is the standard 1-dimensional complex (or 2-dimensional real) representation given by $tz = \alpha(t) \cdot z, \; z \in \mathbb{C}$. 
\end{con}

It is assumed that all weight vectors $\alpha_{x,i}$ are nonzero  since otherwise $x$ is not isolated. Since there is no $T$-invariant complex structure
on $X$, the vectors $\alpha_{x,i}$ are only determined up to sign. The choice of sign is nonessential
for the arguments of this paper. It should be stated as well that the weight vectors are
considered as the elements of rational space $N \otimes \mathbb{Q} \cong \mathbb{Q}^k$
if necessary.

\subsection{Borel construction}
By using the Milnor construction of $G$, we can construct the space
$EG$ which satisfies the following two conditions:
\begin{enumerate}
    \item $EG$ is contractible;
    \item $G$ acts on $EG$ freely, i.e., $G_{x} = {e}$ for all $x \in EG$.
\end{enumerate}
Because $G$ acts on $EG$, we have its orbit space $EG/G$. The symbol
$BG$ represents $EG/G$ and we call it a \textit{classifying space} of $G$.

Now let $X$ be a $G$-space and $\phi$ a $G$-action on $X$. Now the product space $EG \times X$ has a diagonal $G$-action by 
$$(a, x) \xrightarrow[]{} (ag^{-1}, \phi(g, x))$$
where $a \in EG$, $x \in X$ and $g \in G$. Therefore, we can take its orbit space $(EG \times M)/G = EG \times_{G} X$. We call $EG \times_{G} X$ the \textit{Borel construction} (or homotopy quotient). The symbol $X_{G}$ also represents the Borel construction of $G$-space $X$.

\subsection{Equivariant formality}
Now we can define an equivariant formal action. Let $R$ denote a coefficient ring (either $\mathbb{Z}$ or a field). \par
There is a Serre fibration $p\colon X_G\stackrel{X}{\to} BG$. The fibration $p$ induces the Serre spectral sequence:
$$
E_2^{p,q}\cong H^p(BG^k;R)\otimes H^q(X;R)\Rightarrow H^{p+q}(X_T;R).
$$
\begin{defin} 
For a topological group $G$, a continuous $G$-action on a topological space $X$ is
called \textit{cohomologically equivariantly formal} if the Serre spectral sequence of the associated Borel
fibration $X \xrightarrow[]{} EG \times_{G} X \xrightarrow[]{} BG$ collapses at the $E_{2}$ stage.\par
We simply call such actions and spaces \textit{equivariantly formal}.
\end{defin}
\begin{defin}
The \textit{equivariant cohomology ring} with coefficients in $R$ is $H^*_T(X;R)=H^*(X_T;R)$ which is a module over the polynomial ring $H^*(BT;R)$.
\end{defin}

\subsection{GKM graph}
\begin{con}\label{conFiltrationOrbitType}
For a $T$-action on a topological space $X$ consider the filtration
\begin{equation}\label{eqEquivFiltrGeneralX}
X_0\subset X_1\subset\cdots \subset X_k
\end{equation}
where $X_i$ is the union of all orbits of the action having dimension $\leqslant i$. In other words,
\[
X_i=\{x\in X\mid \dim T_x\geqslant k-i\}
\]
according to the natural homeomorphism $Tx\cong T^k/T_x$. Filtration~\eqref{eqEquivFiltrGeneralX} is called the orbit type filtration, and $X_i$ the equivariant $i$-skeleton of $X$. Each $X_i$ is $T$-stable.
\end{con}

\begin{defin}
    A manifold $X$ equipped with an effective $T$-action is called \textit{a manifold of GKM type} if $X_{1}$ is a union of $T$-invariant 2-spheres.
\end{defin}

\begin{defin}
    A manifold of GKM type is called \textit{a GKM manifold} if it is equivariantly formal.
\end{defin}

In this paper we study manifolds of isospectral skew-symmetric matrices. They are denoted as $Q_{\Gamma,\lambda}$. They will be defined later, but it is important to make a following remark.
\begin{rem}
    All $Q_{\Gamma,\lambda}$ are of GKM type but not all of them are equivariantly formal.
\end{rem}




There is a standard result that is true for all GKM manifolds.
\begin{prop}\label{propGKMspheres}
The 1-dimensional equivariant skeleton $X_1$ of a GKM-manifold is a union of $T$-invariant 2-spheres. Each invariant 2-sphere connects 2 fixed points.
\end{prop}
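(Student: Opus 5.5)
We need to prove Proposition~\ref{propGKMspheres}: for a GKM manifold $X$ (effective $T$-action, $X_1$ a union of invariant 2-spheres, equivariantly formal, compact with isolated fixed points), each invariant 2-sphere in $X_1$ connects exactly two fixed points.

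The first claim is immediate from the definition of a manifold of GKM type, so the work is in the second. The plan is to analyze an arbitrary $T$-invariant 2-sphere $S\subset X_1$ by restricting the action to $S$ and classifying torus actions on $S^2$.

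\textbf{Step 1: the restricted action is effective for a circle quotient.} Since $S\subset X_1$, every orbit in $S$ has dimension $\le 1$. The action of $T$ on $S$ factors through $T/K$, where $K$ is the kernel of the action on $S$. A compact connected abelian group acting effectively on $S^2$ is either trivial or a circle, since $\dim S^2=2$ and orbits in $S$ have dimension at most $1$.

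\textbf{Step 2: the action on $S$ is nontrivial.} If $T$ acted trivially on $S$, every point of $S$ would be fixed. But $X^T$ is finite by the standing assumption of isolated fixed points, which is a contradiction. Hence $T/K\cong S^1$ acts effectively and nontrivially on $S^2$.

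\textbf{Step 3: a circle acting on $S^2$ has exactly two fixed points.} For this I would use the Lefschetz fixed point formula, or equivalently the fact that $\chi(S^F)=\chi(S)$ for a circle action. This gives $\chi(S^{S^1})=2$. The fixed set of a smooth circle action on a surface is a closed submanifold. Points in it cannot be isolated in dimension one, so the fixed set is a disjoint union of points and circles. Near a fixed point, the linearized tangent representation is a nontrivial rotation, since the action is effective and $S$ is connected; so the fixed points are isolated. The fixed set is therefore finite with Euler characteristic $2$, i.e.\ exactly two points.

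To be fully elementary, one can instead note that $S/S^1$ is a 1-dimensional compact orbit space, namely an interval whose endpoints are the fixed points. Either way, $S$ contains exactly two $T$-fixed points, because $T$-fixed points of $S$ coincide with $(T/K)$-fixed points.

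The main subtlety is the smoothness and effectiveness needed in Step 3, namely that the 2-sphere is a smooth invariant submanifold so that the linearization argument applies. Here I would invoke that $X_1$ near a point with one-dimensional orbit is a smooth 2-dimensional submanifold, namely a component of the fixed set of the codimension-one isotropy subtorus. This is a standard fact for compact Lie group actions.

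Equivariant formality is not really needed for this statement; only the GKM-type hypothesis and finiteness of $X^T$ are used.
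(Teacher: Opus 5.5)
The paper gives no proof of this statement. It calls the result standard and remarks only that GKM type suffices, which matches your observation that equivariant formality is never used. Your argument is the standard justification the paper leaves implicit, and it is correct in substance: restrict to an invariant sphere $S$, show that $T$ acts on $S$ through a circle, and count fixed points by $\chi(S^{S^1})=\chi(S)=2$ (or via $S/S^1\cong[0,1]$).

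Two of your justifications need repair.

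\textbf{Step 1.} The fact $\dim S^2=2$ is not why $T/K$ has rank at most one. What you need is that an effective torus action on a connected manifold has free principal orbits, so some orbit has dimension equal to the rank. A cleaner route also avoids needing $S$ to be smooth. By the slice theorem, the identity component $T_y^0$ of the stabilizer is locally constant on $X_1\setminus X_0$. Moreover, $S\setminus X^T$ is connected, being a sphere minus finitely many points. Hence a single codimension-one subtorus $H$ fixes $S$ pointwise, and $T$ acts on $S$ through $T/H\cong S^1$.

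\textbf{The ``standard fact'' in your last paragraph.} It is false for general torus actions: the component of $X^{T_x^0}$ through $x$ can have dimension greater than $2$. For example, let $T^2$ act on $\mathbb{C}P^3$ by $[z_0:t_1z_1:t_1^2z_2:t_2z_3]$. This action is effective with four isolated fixed points. Yet $\{z_3=0\}\cong\mathbb{C}P^2$ is pointwise fixed by $\ker\epsilon_1$ and lies entirely in $X_1$. It is a union of infinitely many invariant conics and lines, so even the paper's literal definition of GKM type does not exclude this example. The fact holds only when $X_1$ is a finite union of spheres, equivalently when the weights at each fixed point are pairwise linearly independent. That is the intended meaning of GKM type, but it is not a general property of compact Lie group actions.

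You do not actually need this fact. The set $S\cap X^T\subseteq X^T$ is finite by the standing hypothesis, so the linearization step in Step 3 is redundant. The count $|S\cap X^T|=\chi(S^{S^1})=\chi(S)=2$ finishes the proof directly.
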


It is easy to see that it is sufficient for a manifold to be of a GKM type for this proposition to be true. In the next chapter we prove that the manifolds we consider are of GKM type and that Proposition~\ref{propGKMspheres} holds for them.

Proposition~\ref{propGKMspheres} together with Construction~\ref{tang} gives us a certain graph. It consists of invariant points and spheres and has the same valence for each vertex (the number of tangent weights at a vertex).

We can now give a definition that will be essential for the Theorem~\ref{Th}. 

\begin{defin}\label{graph_def}
A \textit{GKM graph} $\G$ of a manifold $X$ is a finite graph on the vertex set $V = X_0$ with an edge set $E = X_1 / T$. GKM graph must also be equipped with a function $\alpha\colon E\to \Hom(T^k,T^1)$, which satisfies $\alpha(pq)=\pm\alpha(qp)$ for all edges $e=(pq)$. The function $\alpha$ is called \emph{an axial function}.
\end{defin}

The axial function in this definition is the ''sum'' of tangent weights at every point. It easily follows that $\alpha(pq)=\pm\alpha(qp)$ for an invariant sphere connecting stable points $p$ and $q$.

\section{Isospectral matrices}
 In this section we define the main objects of this paper - the manifolds of isospectral matrices. We will also state Theorem~\ref{Th} at the end of the section.

\subsection{Skew-symmetric matrices}
\begin{con}\label{def_q}
Let $Q_{2n}$ denote the vector space of all real skew-symmetric matrices of size $2n \times 2n$. This is a vector space of dimension $n(2n-1)$.  \par 
We now consider our $2n \times 2n$ matrices as $n \times n$ matrices of $2 \times 2$ blocks. Let $\Gamma$ be a simple graph on the vertex set $V_{\Gamma} = [n] = \{1, \ldots, n\}$ with an edge set $E_{\Gamma}$. It is assumed that $\Gamma$ is connected. A skew-symmetric matrix $A = (a_{i, j}) \in Q_{2n}$ is called $\Gamma$-shaped if it's blocks on the places $(i,j)$ are $0$  for $\{i, j\} \not \in E_{\Gamma}$ $(i \not \eq j)$, i.e.
$a_{2i- 1,2j-1} = a_{2i- 1,2j} = a_{2i,2j-1} = a_{2i,2j} =0$. The space $Q_{\Gamma}$ of all $\Gamma$-shaped matrices is a vector space of dimension $n + 4|E_{\Gamma}|$.
\par

The set of eigenvalues of skew-symmetric matrices is of the form $\{\pm i \lambda_{1}, \ldots, \pm i \lambda_{n}\}$. So for a given set $\lambda = \{\lambda_{1}, \ldots, \lambda_{n}\}$ of real numbers let us consider the subset $Q_{\Gamma, \lambda} \subset Q_{\Gamma}$ of all skew-symmetric matrices with eigenvalues $\{\pm i \lambda_{1}, \ldots, \pm i \lambda_{n}\}$. We say that $Q_{\Gamma, \lambda}$ is the set of isospectral matrices.

\end{con}

\begin{prop}
    $Q_{\Gamma, \lambda}$ is a smooth manifold for a generic choice of $\lambda$. 
\end{prop}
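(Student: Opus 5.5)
We treat $Q_{\Gamma,\lambda}$ as a level set and show it is a regular level set for generic $\lambda$.

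\textbf{Setup.} Since the characteristic polynomial of a skew-symmetric $2n\times 2n$ matrix is even,
$$
\chi_A(t)=\det(t-A)=t^{2n}+c_1(A)t^{2n-2}+\cdots+c_n(A).
$$
Here $c_k(A)$ is the $k$-th elementary symmetric function of $\lambda_1^2,\ldots,\lambda_n^2$. Hence
$$
Q_{\Gamma,\lambda}=F^{-1}(e(\lambda)),\qquad F=(c_1,\ldots,c_n)\colon Q_\Gamma\to\mathbb{R}^n,
$$
with $e(\lambda)=(e_1(\lambda^2),\ldots,e_n(\lambda^2))$. The map $F$ is polynomial. By Sard's theorem its critical values form a measure-zero set $C\subset\mathbb{R}^n$. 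For $\lambda$ with $e(\lambda)\notin C$, the set $Q_{\Gamma,\lambda}$ is a smooth closed submanifold of $Q_\Gamma$ of dimension $n+4|E_\Gamma|-n=4|E_\Gamma|$, or it is empty.

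\textbf{What "generic" should mean.} Because $F$ is real algebraic, $C$ is contained in a proper semialgebraic subset. The map $\lambda\mapsto e(\lambda)$ is a local diffeomorphism wherever the $\lambda_i^2$ are pairwise distinct and nonzero. So the bad set of $\lambda$ has measure zero, and it is in fact contained in a proper algebraic subset. This already proves the statement in its literal form.

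\textbf{Identifying the genericity condition.} I would also try to show that every $\lambda$ with distinct nonzero $|\lambda_i|$ is generic. Equivalently, at each $A\in Q_{\Gamma,\lambda}$ the differentials $dc_1,\ldots,dc_n$ restricted to $Q_\Gamma$ should be linearly independent.

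On all of $Q_{2n}=\mathfrak{so}(2n)$ this holds. The invariants $c_k$ are a basis of invariant polynomials for type D, up to replacing $c_n$ by the Pfaffian. Their gradients at a regular element $A$ span the centralizer $\mathfrak{z}(A)$, which is a Cartan subalgebra, so $\dim\mathfrak{z}(A)=n$.

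We only restrict to the subspace $Q_\Gamma$. So we need the orthogonal projection of $\mathfrak{z}(A)$ to $Q_\Gamma$ to be injective. Equivalently, no nonzero element of $\mathfrak{z}(A)$ may lie in $Q_\Gamma^\perp$, the matrices supported off the diagonal blocks and off the edge blocks. I expect this to be the main obstacle, and I would attack it in two steps.

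\begin{enumerate}
\item Write $\mathfrak{z}(A)$ as polynomials in $A$; it is spanned by the odd powers $A,A^3,\ldots,A^{2n-1}$.
\item Suppose $p(A)=\sum_k a_kA^{2k-1}$ vanishes on all diagonal $2\times2$ blocks and all edge blocks. Use connectivity of $\Gamma$ and the fact that the diagonal blocks of $A^{2k-1}$ are controlled by closed walks in $\Gamma$ to derive $p=0$.
\end{enumerate}

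If this direct argument stalls, I fall back on the Sard argument above, which suffices for "generic".
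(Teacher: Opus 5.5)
Your Sard argument is correct and is essentially the paper's proof: both apply Sard's theorem to a map on $Q_\Gamma$ whose fibres are the sets $Q_{\Gamma,\lambda}$. The difference is the choice of map. You use the polynomial map $F=(c_1,\dots,c_n)$, which is smooth on all of $Q_\Gamma$. The paper uses the normal-form map $f\colon Q_\Gamma\to C$ into the Weyl chamber, which is only smooth on the open set of regular elements. (That is enough, since the relevant fibres lie there.) Because your target is coefficient space, you also need the extra step of pulling the null set back along $\lambda\mapsto e(\lambda)$, and you do this correctly.

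Your optional Step~2 cannot succeed, however. The reduction to $\mathfrak{z}(A)\cap Q_\Gamma^\perp=0$ is the right criterion, but it already fails for $\Gamma=C_4$ at a point where the $|\lambda_i|$ are nonzero and pairwise distinct.

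Take edges $\{1,2\},\{2,3\},\{3,4\},\{4,1\}$ and
\[
H=\begin{pmatrix}10&1&0&2\\1&10&2&0\\0&2&10&1\\2&0&1&10\end{pmatrix}.
\]
Then $(H-10I)^2=5I+4P$, where $P$ is the permutation matrix of $(1\,3)(2\,4)$, and $\operatorname{spec}H=\{7,9,11,13\}$. Let $A\in Q_{C_4}$ be the realification of $iH$, as in the paper's construction of $M'_{\Gamma,\lambda,\mathbb{R}}$; its eigenvalues are $\pm7i,\pm9i,\pm11i,\pm13i$.

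Realification is an algebra homomorphism. For odd real $p$ one has $p(iH)=i\,q(H)$, where $q$ ranges over all odd real polynomials as $p$ does. Odd polynomials of degree $\le 7$ interpolate any function on $\{7,9,11,13\}$. Hence some nonzero odd $p$ gives $p(A)$ equal to the realification of $i\bigl((H-10I)^2-5I\bigr)=4iP$.

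This matrix is nonzero, has zero diagonal blocks, and is supported only in the non-edge blocks $(1,3),(3,1),(2,4),(4,2)$. So $\operatorname{tr}(p(A)X)=0$ for all $X\in Q_{C_4}$, and $p(A)\in\mathfrak{z}(A)\cap Q_\Gamma^\perp$. The differentials of $c_1,\dots,c_4$ restricted to $Q_{C_4}$ are therefore dependent at $A$. Thus $\lambda=(7,9,11,13)$ is a critical value, and no closed-walk argument can give $p=0$ in general.

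The same example shows that the paper's parenthetical identification of ``generic'' with ``nonzero and pairwise distinct'' is too optimistic. The genericity condition genuinely depends on $\Gamma$. Your Sard fallback, with the bad set of $\lambda$ of measure zero and contained in a proper algebraic subset, is the correct form of the statement, and it is the one you should keep.
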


\begin{proof}
    We have a smooth action $SO(2n) \acts \mathfrak{so}(2n) = Q_{2n}$ of a Lie group on a smooth manifold. Using this action from every matrix $A \in Q_{2n}$ we can get a matrix $f(A)$ of the form 
$$\begin{pmatrix}
    \begin{bmatrix}
    0        & \lambda_{1} \\
    -\lambda_{1}       & 0 
    \end{bmatrix}
    & \hdotsfor[1.5]{2} &
    \begin{bmatrix}
        0 & 0 \\
        0 & 0
    \end{bmatrix} \\
    \vdots &
    \begin{bmatrix}
    0        & \lambda_{2} \\
    -\lambda_{2}       & 0 
    \end{bmatrix}
             &   &  \vdots  \\
    \vdots &   & \ddots &  \vdots \\
    \begin{bmatrix}
        0 & 0 \\
        0 & 0
    \end{bmatrix}
     & \hdotsfor[1.5]{2} &
     \begin{bmatrix}
    0        & \lambda_{n} \\
    -\lambda_{n}       & 0 
    \end{bmatrix}
    
\end{pmatrix}$$

This defines a smooth map $f:Q_{\Gamma} \rightarrow C$ to a Weyl chamber $C$ of type $D_n$. For this map we have $f^{-1}(\lambda) = Q_{\Gamma, \lambda}$. It follows from Sard’s lemma that $Q_{\Gamma, \lambda}$ is a smooth closed submanifold in $Q_{\Gamma}$ for generic choice of $\lambda$. Generic choice of $\lambda$ in this case means that $f(\lambda)$ is not a critical value of $f$, i.e. all $\lambda_i$ are not $0$ and pairwise distinct.


\end{proof}

In the following, it is always assumed that $\lambda$ is generic, in
the sense that $Q_{\Gamma, \lambda}$ is smooth.\par
A simple count of parameters implies dim $Q_{\G, \lambda} = 4|E_\G|$. However, this manifold is not connected. 
\begin{prop}
    $Q_{\G, \lambda}$ is a disjoint union of two manifolds $Q_{\G, \lambda}^{+}$ and $Q_{\G, \lambda}^{-}$. These manifolds are diffeomorphic to each other.
\end{prop}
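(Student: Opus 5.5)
The natural invariant is the Pfaffian. For a real skew-symmetric $2n\times 2n$ matrix $A$ with eigenvalues $\{\pm i\lambda_1,\ldots,\pm i\lambda_n\}$ we have $\pf(A)^2=\det A=\prod_{k=1}^n \lambda_k^2$. Since $\lambda$ is generic, every $\lambda_k\neq 0$, so $\pf(A)=\pm\prod_k\lambda_k$ and in particular $\pf(A)\neq 0$ on $Q_{\G,\lambda}$. The Pfaffian is a polynomial, hence continuous, on $Q_\G$. Therefore it takes exactly one of two values on each connected component of $Q_{\G,\lambda}$. Set
\[
Q^{\pm}_{\G,\lambda}=\{A\in Q_{\G,\lambda}\mid \pf(A)=\pm\textstyle\prod_k\lambda_k\}.
\]
These two sets are disjoint, each is open and closed in $Q_{\G,\lambda}$, and their union is $Q_{\G,\lambda}$. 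Each is therefore a smooth closed submanifold. This gives the decomposition $Q_{\G,\lambda}=Q^+_{\G,\lambda}\sqcup Q^-_{\G,\lambda}$. In terms of $SO(2n)$-orbits, $Q^{+}$ and $Q^-$ are the intersections of $Q_\G$ with the two $SO(2n)$-orbits into which the $O(2n)$-orbit of $f(A)$ splits.

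For the diffeomorphism, I would conjugate by an orthogonal matrix of determinant $-1$ that preserves the $\G$-shape. Take $D=\mathrm{diag}(1,-1,1,1,\ldots,1)$, which swaps the sign of the second coordinate inside the first $2\times 2$ block. The map $\phi(A)=DAD^{-1}=DAD$ has the following properties.
\begin{enumerate}
\item It preserves skew-symmetry and the spectrum.
\item It preserves the sparsity pattern: $D$ is block diagonal with $2\times2$ blocks, so each block $A_{ij}$ becomes $D_iA_{ij}D_j$, and zero blocks stay zero.
\item It satisfies $\pf(DAD^T)=\det(D)\,\pf(A)=-\pf(A)$.
\end{enumerate}
Hence $\phi$ maps $Q^+_{\G,\lambda}$ to $Q^-_{\G,\lambda}$. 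It is the restriction of a linear involution of $Q_\G$, so it is a diffeomorphism with inverse $\phi$ itself. I would also note that $D$ commutes with the torus $T^n\subset SO(2)^n$ only up to inversion in the first factor, so $\phi$ is equivariant up to an automorphism of $T^n$; this will matter later for the GKM graphs.

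The one point needing care is that both pieces are nonempty. Otherwise the statement degenerates, and one might also want each piece connected, although the statement only asserts a disjoint union of two diffeomorphic manifolds. Nonemptiness is immediate: the diagonal matrix $f(A)$ from the previous proof is $\G$-shaped and has Pfaffian $\prod_k\lambda_k$, and $\phi$ of it has the opposite sign. If connectedness of $Q^\pm$ is intended, I expect that to be the main obstacle. I would approach it through the torus action and GKM structure developed later, or via connectivity of $\G$, rather than here.
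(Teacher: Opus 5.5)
Your proposal is correct and follows the paper's argument. Like you, the paper separates $Q_{\G,\lambda}$ by the sign of the Pfaffian (using $\det = \pf^2 \neq 0$) and swaps the two pieces by conjugating with a diagonal orthogonal matrix of determinant $-1$, namely $\mathrm{diag}(-1,1,\ldots,1)$ in place of your $\mathrm{diag}(1,-1,1,\ldots,1)$, via $\pf(B^TAB)=\det(B)\pf(A)$; your checks that this conjugation preserves the $\G$-shape and that both pieces are nonempty are details the paper leaves implicit.
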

\begin{proof}
It is well-known that the determinant of a skew-symmetric matrix $Q$ is equal to Pfaffian squared:
$$\det(Q) = \pf(Q)^{2}$$
This equation divides $Q_{\G, \lambda}$ into two separate components: $\sqrt{\det(Q)} = +\pf(Q)$ and $\sqrt{\det(Q)} = -\pf(Q)$. We will denote them $Q_{\G, \lambda}^{+}$ and $Q_{\G, \lambda}^{-}$ respectively. It is obvious that these components do not intersect each other and are not connected to each other, since none of them contains matrices $A$ with $\det(A) = 0$.
\par
These two components are  diffeomorphic to each other. Indeed, let us define a map 
$$f: \; Q_{\G, \lambda}^{+} \rightarrow Q_{\G, \lambda}^{-}$$ such that $$f(A) = A'$$
where $\widetilde{f}(a_{1, 1}) = a'_{1, 1} = 0; \; \forall i \not = 1: \; \widetilde{f}(a_{1, i}) = -a'_{1, i}, \; \widetilde{f}(a_{i, 1}) = -a'_{i, 1}$; and $\forall i \not = 1, j\not = 1: \; \widetilde{f}(a_{j, i}) = a'_{j, i}$. \\
So $f(A) = B^{T}AB$ for $$B = 
\begin{pmatrix}
    -1 & 0 & \hdotsfor[1.5]{2}\\
    0 & 1 & & \\
    & & \ddots & \\
    & & & 1
\end{pmatrix}
$$ Since $\pf(B^{T}AB) = \det(B) \cdot \pf(A) = -\pf(A)$,  we obtain that $f$ is well-defined and bijective. This map is obviously a diffeomorphism by construction. 
\end{proof}

\begin{con}
We can define a torus action $T^n\acts Q_{\G, \lambda}$ by conjugation. Obviously, the conjugation preserves the diagonal blocks, the spectrum and the Pfaffian. So it is well-defined.
\end{con}

\subsection{Hermitian matrices}
We also describe the analogous construction for Hermitian matrices.  Everything written in this section, and throughout this paper, about Hermitian matrices follows the paper of A.Ayzenberg and V. Buchstaber \cite{A1}.

\begin{con}\label{def_m}
    Let $M_n$ denote the vector space of all Hermitian matrices of size $n \times n$. This is a real vector space of dimension $n^2$. Let $\Gamma = ([n], E_\Gamma)$ be a simple graph. It is assumed that $\Gamma$ is connected. A matrix $A = (a_{i,j})$ is called $\Gamma$-shaped if $a_{i,j} = 0$ for $\{i, j\} \not \in E_\Gamma$. The space of all Hermitian $\Gamma$-shaped matrices is denoted by $M_\Gamma$.\par
    As previously we fix a set  $\lambda = \{\lambda_{1}, \ldots, \lambda_{n}\}$ of real numbers and consider the subset $M_{\Gamma, \lambda} \subset M_{\Gamma}$ of all $\Gamma$-shaped matrices with eigenvalues $\{\lambda_{1}, \ldots, \lambda_{n}\}$.
\end{con}

It is shown in \cite[Construction 2.24]{A1} that $M_{\Gamma, \lambda}$ is a smooth manifold for a generic choice of $\lambda$. There is also a $T^n$ action on  $M_{\Gamma, \lambda}$. Indeed, since $U(n)$ acts on $M_n$ by conjugation and
$T^n = \{$diag$(t_1, \ldots, t_n), \; t_{i}\in \mathbb{C},\; |t_i| = 1\}$, there is a $T^n$ action on $M_n$. Moreover, $M_\G$ is stable under this action and it also preserves the spectrum. Thus, there is a $T^n$ action on  $M_{\Gamma, \lambda}$.

\subsection{$\G = K_2$ case}

In order to formulate the first theorem we need to consider manifolds that correspond to the simplest graph possible - $K_{2}$.
\begin{ex}\label{firstEx}
Let $\G = K_2$ be a graph on $2$ vertices with a single edge. Then $M_\G$ is a vector space of all Hermitian matrices of size $2 \times 2$, and $M_{\G, \lambda}$ is the set of $2 \times 2$ isospectral Hermitian matrices. It is obvious that $T^2 = \{$diag$(t_1, t_2), \; t_{i}\in \mathbb{C},\; |t_i| = 1\}$ acts on $M_{\G, \lambda}$ by conjugation. It can be seen explicitly that the fixed points of this action are two diagonal matrices with spectrum $\lambda$. Moreover $M_{\G, \lambda}  \cong S^2$, so this manifold is itself an invariant 2-sphere. It means that the GKM graph of $M_{\G, \lambda}$ is equal to $\G$. 
\end{ex}

\begin{ex}\label{secondEx}
Let $\G = K_2$. Then $Q_{\G,\lambda}$ is a manifold of $4 \times 4$ isospectral skew-symmetric matrices. The torus $T^2 = U(1) \times U(1) \cong SO(2) \times SO(2)$ acts on it by conjugation. 
\par
There is a well-known topological result that a connected 4-manifold compatible with an effective $T^2$-action must be $S^4$ or $S^{2} \times S^{2}$, or all connected sums of $\mathbb{C}P_{2}$ and $\overline{\mathbb{C}P_{2}}$. Hence $Q_{\Gamma,\lambda}^{+} \cong Q_{\Gamma,\lambda}^{-} \cong  (SO(4)/T^2) / \{\pm 1\}$ we have the following fact.

\begin{stm}
    $Q_{\Gamma,\lambda}^{+} \cong Q_{\Gamma,\lambda}^{-} \cong  S^2\times S^2$ and $T^2$ acts component-wise.
\end{stm}



It can be proved that invariant points of $Q_{\Gamma,\lambda}^{+}$ under this action are four following matrices: 

$$\begin{pmatrix}
    \begin{bmatrix}
       0 & \lambda_1 \\
       -\lambda_1 & 0
    \end{bmatrix}   
    & 0 \\
    0 &
    \begin{bmatrix}
       0 & \lambda_2 \\
       -\lambda_2 & 0
    \end{bmatrix} 
\end{pmatrix}
,
\begin{pmatrix}
    \begin{bmatrix}
       0 & \lambda_2 \\
       -\lambda_2 & 0
    \end{bmatrix}   
    & 0 \\
    0 &
    \begin{bmatrix}
       0 & \lambda_1 \\
       -\lambda_1 & 0
    \end{bmatrix} 
\end{pmatrix} ,$$
$$
\begin{pmatrix}
    \begin{bmatrix}
       0 & -\lambda_1 \\
       \lambda_1 & 0
    \end{bmatrix}   
    & 0 \\
    0 &
    \begin{bmatrix}
       0 & -\lambda_2 \\
       \lambda_2 & 0
    \end{bmatrix} 
\end{pmatrix}
,
\begin{pmatrix}
    \begin{bmatrix}
       0 & -\lambda_2 \\
       \lambda_2 & 0
    \end{bmatrix}   
    & 0 \\
    0 &
    \begin{bmatrix}
       0 & -\lambda_1 \\
       \lambda_1 & 0
    \end{bmatrix} 
\end{pmatrix}.$$

There also four fixed points of a $T^2$-action on the manifold $Q_{\Gamma, \lambda}^{-}$. They can be obtained from the shown matrices by multiplying the top left block by 
    $\begin{bmatrix}
       0 & -1 \\
       -1 & 0
    \end{bmatrix} $.

\begin{rem}
    This particular manifold can also be obtained as the manifold of full flags of type $D_{2}$. Indeed, $Q_{\Gamma,\lambda} = SO(4)/T$, where $SO(4)$ is a Lie group and $T^2$ its Borel subgroup. 
\end{rem}

\begin{rem}
    Since Weyl group $D_n = \Sigma_n \ltimes \mathbb{Z}_{2}^{n-1}$, we have $D_2 =  \Sigma_2 \ltimes \mathbb{Z}_{2}$. So $D_2$ consists of 4 elements that correspond to 4 fixed points of $Q_{\Gamma,\lambda}^{+} \cong Q_{\Gamma,\lambda}^{-}$.
\end{rem}

Now we want to find a 1-skeleton of this manifold. It will be done explicitly in Section~\ref{Proof} of this paper. For now we will just say that 1-skeleton consists of 2-spheres that connect invariant matrices with different order of eigenvalues $\lambda_1$, $\lambda_2$ and preserve the parity of minus signs above the diagonal. So for $Q_{\Gamma,\lambda,+}$ there are two spheres containing each fixed point and 4 spheres in total. This is the corresponding GKM graph:

\centering
\begin{tikzpicture} 
    \begin{scope} [vertex style/.style={draw,
                                       circle,
                                       minimum size=3mm,
                                       inner sep=3pt,
                                       outer sep=0pt,
                                       shade}]
    \path 
    (0 , 2.5) coordinate[vertex style] (c')
    (5 , 2.5) coordinate[vertex style] (d')
    (0 , 3.5) coordinate[vertex style] (e')
    (5 , 3.5) coordinate[vertex style] (f')
    
       ; 
    \end{scope}

     \begin{scope} [edge style/.style={draw=black}]
        \draw [edge style] (c')--(d');
        \draw [edge style] (c')--(f');
        \draw [edge style] (e')--(d');
        \draw [edge style] (e')--(f');
     \end{scope}
     \node[above left] at (e') {$\lambda_1, \lambda_2$};
     \node[below left] at (c') {$-\lambda_1, -\lambda_2$};
     \node[above right] at (f') {$\lambda_2, \lambda_1$};
     \node[below right] at (d') {$-\lambda_2, -\lambda_1$};

  \end{tikzpicture}

\end{ex}

Examples~\ref{firstEx} and \ref{secondEx} show us a connection between GKM graphs of $M_{K_2, \lambda}$ and $Q_{K_2, \lambda}$. Theorem~\ref{Th} expands this observation on every simple graph.\par 

\begin{defin}
Let us denote fixed point of Hermitian matrices by $A_\sigma$ with $\sigma \in \Sigma_n$. We will also denote fixed points of skew-symmetric matrices by $A_{s, \sigma}$, where $s\in(\mathbb{Z}_{2}^{n}, +)$.
\end{defin}

We now can prove Theorem~\ref{Th}.

\section{Proof of Theorem~\ref{Th}}\label{Proof}
    First of all we formulate the construction essential to the Theorem~\ref{Th}.
    \begin{con}\label{graph}
    Let $\G = ([n], E_\G)$ be a simple graph and $\mu$ is a GKM graph of $M_{\G, \lambda}$. We construct a new graph $\eta$ as follows.\par
    For every vertex $A_{\sigma}$ of $\mu$ we make $2^n$ vertices of $\eta$ denoted by $A_{s, \sigma}$.\par
    For every edge between vertices $A_{\sigma}$ and $A_{\sigma \cdot (i,j)}$ of $\mu$ we make $2^n$ edges of $\eta$. Each of them connect following vertices: $A_{s, \sigma}$ and $A_{s', \sigma \cdot (i,j)}$, where $s_k = s'_{k}$ for all $k \not \in \{i,j\}$ and $s_{i} + s_{j} = s'_{i} + s'_{j}$. \par
    The set of edges of $\eta$ is equipped with an axial function. It is defined as follows:\par
    $\alpha(A_{s, \sigma}A_{s', \sigma \cdot (i,j)}) = \epsilon_i + \epsilon_j$ if $s_{i} = s'_i$ (equivalently $s_{j} = s'_{j}$) \par
    $\alpha(A_{s, \sigma}A_{s', \sigma \cdot (i,j)}) = \epsilon_i - \epsilon_j$ if $s_{i} \not = s'_i$ (equivalently $s_{j} \not = s'_{j}$), \par
    where $\epsilon_1,\ldots,\epsilon_n$ is the standard basis of Hom$(T^n, S^1) \cong \mathbb{Z}^n$, that is $\epsilon_i((t_1, \ldots, t_n)) = t_i$ for $i = 1, \ldots, n$.
\end{con}
   Now we formulate some known facts about isospectral Hermitian matrices. The proof may be found in \cite{A1}.
    
\subsection{Hermitian matrices}\label{M_GKM}
As we already said there is a $T^n$-action on $M_{\G, \lambda}$. The following statements help us describe the GKM graph of $M_{\G, \lambda}$.
\begin{prop}
    \cite[Remark 2.33]{A1}  Independently of $\G$, the torus action on $M_{\G, \lambda}$ has exactly $n!$ isolated fixed points. Fixed points are given by the diagonal Hermitian matrices $A_\sigma = diag(\lambda_{\sigma(1)}, \ldots, \lambda_{\sigma(n)})$.
\end{prop}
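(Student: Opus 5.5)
We must prove that the $T^n$-action on $M_{\G,\lambda}$ has exactly the $n!$ fixed points $A_\sigma=\mathrm{diag}(\lambda_{\sigma(1)},\ldots,\lambda_{\sigma(n)})$, for every connected $\G$ and generic $\lambda$. The plan is to characterize the fixed matrices directly: fixedness forces the matrix to be diagonal, and the spectrum condition then pins down the diagonal up to permutation.

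\emph{Step 1 (fixed points are diagonal).} Let $t=\mathrm{diag}(t_1,\ldots,t_n)\in T^n$ and $A\in M_{\G,\lambda}$. The $(i,j)$ entry of $tAt^{-1}$ is $t_i\bar t_j\,a_{i,j}$. Suppose $A$ is fixed. For $i\neq j$, choose $t$ with $t_i\bar t_j\neq 1$; for instance $t_i=-1$ and all other coordinates equal to $1$. This gives $a_{i,j}=-a_{i,j}$, so $a_{i,j}=0$. Hence every fixed point is diagonal. Conversely, every diagonal matrix commutes with $T^n$, so it is fixed. Diagonal matrices are $\G$-shaped for any $\G$, since the sparsity condition only restricts off-diagonal entries. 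This is exactly why the answer does not depend on $\G$.

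\emph{Step 2 (count the diagonal isospectral matrices).} A diagonal matrix has its diagonal entries as eigenvalues. So a diagonal Hermitian matrix lies in $M_{\G,\lambda}$ if and only if its diagonal is a rearrangement of $(\lambda_1,\ldots,\lambda_n)$. That is, it equals $A_\sigma$ for some $\sigma\in\Sigma_n$. Genericity of $\lambda$ includes that the $\lambda_i$ are pairwise distinct; I would take this from the smoothness criterion for $M_{\G,\lambda}$ in \cite[Construction 2.24]{A1}. Under this assumption the map $\sigma\mapsto A_\sigma$ is injective, which gives exactly $n!$ fixed points.

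\emph{Step 3 (isolatedness).} Since the fixed point set is finite, each fixed point is automatically isolated. For a sharper statement compatible with Construction~\ref{tang}, one can also compute tangent weights at $A_\sigma$. For this I would use the standard fact that the tangent space to the isospectral set at a diagonal matrix with distinct eigenvalues is spanned by commutators $[X,A_\sigma]$ with $X$ skew-Hermitian and $\G$-shaped. These have zero diagonal and off-diagonal entries $(\lambda_{\sigma(j)}-\lambda_{\sigma(i)})x_{i,j}$. The tangent space is then $\bigoplus_{\{i,j\}\in E_\G}\mathbb{C}$ with weights $\epsilon_i-\epsilon_j$, all nonzero.

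The only potentially delicate point is this tangent-space identification, which needs transversality at $A_\sigma$. However, it is not required for the count itself; Steps 1 and 2 alone prove the statement.
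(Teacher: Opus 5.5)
Your proof is correct and follows the standard route. The paper does not prove this statement itself but cites \cite[Remark 2.33]{A1}; its own proof of Proposition~\ref{need} for $Q_{\G,\lambda}$ uses the same scheme (a fixed point has vanishing off-diagonal entries, and the diagonal matrices with spectrum $\lambda$ are exactly the $A_\sigma$), and your single choice $t_i=-1$ does this more cleanly than the trigonometric system used there. The transversality caveat in your optional Step 3 is also harmless: at $A_\sigma$ the tangent space to the isospectral orbit is the space of all zero-diagonal Hermitian matrices, while $M_\G$ contains all diagonal ones, so together they span all Hermitian matrices.
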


So the vertices of GKM graph are determined.

\begin{prop}
    \cite[Lemma 2.36]{A1} If $\sigma = \tau \cdot (i,j)$ with $(i, j) \in E_{\G}$, then the matrices $A_{\sigma}$ and $A_{\tau}$ are connected by a 2-sphere of isospectral matrices of the form
    $$
    \begin{pmatrix}
        \lambda_{\sigma(1)} & 0 & & \cdots & & 0\\
        0 & \ddots & &  &\\
        & & * & * &  & \vdots\\
        \vdots & & * & * & &\\
        & & & & \ddots & 0\\
        0 & & \cdots & & 0 & \lambda_{\sigma(n)}
    \end{pmatrix}
    =
    \begin{pmatrix}
        \lambda_{\tau(1)} & 0 & & \cdots & & 0\\
        0 & \ddots & &  &\\
        & & * & * &  & \vdots\\
        \vdots & & * & * & &\\
        & & & & \ddots & 0\\
        0 & & \cdots & & 0 & \lambda_{\tau(n)}
    \end{pmatrix}
    $$
    with the block on positions $(i,j)$.
\end{prop}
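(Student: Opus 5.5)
The statement is local: all the action happens in the $2\times 2$ principal submatrix on rows and columns $i,j$. I would reduce to the case $\G=K_2$ of Example~\ref{firstEx}, i.e.\ to the space of $2\times 2$ Hermitian matrices with spectrum $\{\lambda_{\tau(i)},\lambda_{\tau(j)}\}$, and then embed that space into $M_{\G,\lambda}$.

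\textbf{Step 1: the embedding.} Write $\tau(i)=a$, $\tau(j)=b$, so that $\sigma=\tau\cdot(i,j)$ has $\sigma(i)=b$ and $\sigma(j)=a$. Let $S\subset M_{K_2}$ be the set of $2\times 2$ Hermitian matrices $B$ with spectrum $\{\lambda_a,\lambda_b\}$. Define
$$\iota\colon S\to M_n$$
by placing $B$ in the rows and columns $i,j$ and putting $\lambda_{\tau(k)}$ on the diagonal in every position $k\notin\{i,j\}$, with all other entries zero.
\begin{itemize}
\item Because $\{i,j\}\in E_\G$, the only off-diagonal entries of $\iota(B)$ that may be nonzero sit at $(i,j)$ and $(j,i)$. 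Hence $\iota(B)$ is $\G$-shaped.
\item $\iota(B)$ is block diagonal, so its spectrum is $\{\lambda_{\tau(k)}\}_{k\ne i,j}\cup\{\lambda_a,\lambda_b\}=\lambda$.
\end{itemize}
Therefore $\iota$ lands in $M_{\G,\lambda}$. It is a smooth embedding, since it is the restriction of a linear injection.

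\textbf{Step 2: the image is a $2$-sphere through both fixed points.} The space $S$ consists of the matrices $\frac{\lambda_a+\lambda_b}{2}I+\frac{\lambda_a-\lambda_b}{2}U$, where $U$ is traceless Hermitian with $U^2=I$. Writing
$$U=\begin{pmatrix} x & z\\ \bar z & -x\end{pmatrix},\qquad x^2+|z|^2=1,$$
shows that $S\cong S^2$; this uses genericity, $\lambda_a\neq\lambda_b$. The two poles $x=\pm1$ give the diagonal matrices $\mathrm{diag}(\lambda_a,\lambda_b)$ and $\mathrm{diag}(\lambda_b,\lambda_a)$. Under $\iota$ these go to $A_\tau$ and $A_\sigma$ respectively.

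\textbf{Step 3: invariance and the axial weight.} The image $\iota(S)$ is $T^n$-invariant, since conjugation by $\mathrm{diag}(t_1,\dots,t_n)$ fixes diagonal entries and sends $z\mapsto t_i\bar t_j z$. This also gives the action on the sphere explicitly: it is rotation about the polar axis with weight $\epsilon_i-\epsilon_j$. So the only fixed points on the sphere are the two poles, and every other point has a one-dimensional orbit. Consequently $\iota(S)$ lies in the $1$-skeleton $X_1$ and connects $A_\sigma$ with $A_\tau$, which is the claim.

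\textbf{Main obstacle.} The key point is that the image is exactly a $T$-invariant $2$-sphere joining these two fixed points. Once the embedding is written down, this reduces to the $K_2$ computation and is immediate. The one thing to check carefully is the identification of which pole corresponds to $\sigma$ and which to $\tau$.
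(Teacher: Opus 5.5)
Your proof is correct. The paper gives no argument of its own for this statement (it just cites \cite[Lemma 2.36]{A1}), and your reduction to the isospectral $2\times 2$ block is the standard argument: invariance from $z\mapsto t_i\bar t_j z$, the sphere $x^2+|z|^2=1$, and the poles identified as $A_\tau$ and $A_\sigma$. It is also the Hermitian counterpart of what the paper does for $Q_{\G,\lambda}$ in its sphere theorem, namely invariance from the conjugation formula and the sphere from preserving the characteristic polynomial of the block.

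One small correction: $\iota$ is affine rather than linear, since the diagonal entries $\lambda_{\tau(k)}$ are fixed constants. This does not affect the claim that $\iota$ is a smooth embedding.
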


Since these 2-spheres are also $T$-invariant they correspond to some edges of the $M_{\G, \lambda}$ GKM graph. It remains to determine whether these are all edges of the graph.

The following statements ensure that there are no more edges in the GKM graph of $M_{\G, \lambda}$.

\begin{prop} \cite[Remark 2.33]{A1}
    For $i < j$ let us introduce the vector subspace $V_{ij} \subset M_n$:
    $$V_{ij} = \{A = (a_{i', j'})\,| \, a_{i', j'} = 0 \; \text{unless} \; (i', j') = (i,j)\; or \; (j,i)\}
    $$
    Then space $V_{ij}$ is $T$-invariant and $V_{ij}$ is the irreducible real 2-dimensional representation at point $A_\sigma$ with weight $\epsilon_i - \epsilon_j$, where $\epsilon_1, \ldots, \epsilon_n$ is the standard basis of Hom$(T^n, S^1) \cong \mathbb{Z}^n$, that is $\epsilon_i((t_1, \ldots, t_n)) = t_i$ for $i = 1, \ldots, n$.
\end{prop}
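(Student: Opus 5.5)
The statement asks for three things: that $V_{ij}$ is $T$-invariant, that it is an irreducible real $2$-dimensional representation, and that its weight is $\epsilon_i-\epsilon_j$. The plan is a direct computation of the conjugation action on $V_{ij}$, followed by identifying $V_{ij}$ with a summand of the tangent space at $A_\sigma$.

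\textbf{Step 1: invariance.} Take $t=\mathrm{diag}(t_1,\ldots,t_n)\in T^n$ and $A\in M_n$. Conjugation acts entrywise by $(tAt^{-1})_{i'j'}=t_{i'}\bar t_{j'}a_{i'j'}$. So if every entry of $A$ outside positions $(i,j)$ and $(j,i)$ vanishes, the same holds for $tAt^{-1}$. Hence $V_{ij}$ is $T$-invariant.

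\textbf{Step 2: the representation and its weight.} Identify $V_{ij}\cong\mathbb{C}$ via $A\mapsto z=a_{ij}$; Hermitian symmetry forces $a_{ji}=\bar z$. This identification is real-linear and bijective, so $V_{ij}$ is $2$-dimensional over $\mathbb{R}$. By Step 1, $t$ acts by
\[
z\mapsto t_i t_j^{-1}z=(\epsilon_i-\epsilon_j)(t)\,z .
\]
Thus $V_{ij}\cong V(\epsilon_i-\epsilon_j)$ in the notation of Construction~\ref{tang}.

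\textbf{Step 3: irreducibility.} The character $\epsilon_i-\epsilon_j$ is nontrivial because $i\neq j$. A nontrivial character acts on $\mathbb{C}\cong\mathbb{R}^2$ by rotations through every angle. Hence no real line is invariant, and $V_{ij}$ is irreducible.

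\textbf{Step 4: tangent space at $A_\sigma$.} The tangent space of the isospectral $U(n)$-orbit at $A_\sigma$ is $[\mathfrak{u}(n),A_\sigma]$. For $X\in\mathfrak{u}(n)$, the commutator has entries $[X,A_\sigma]_{i'j'}=x_{i'j'}(\lambda_{\sigma(j')}-\lambda_{\sigma(i')})$. Since $\lambda$ is generic, the eigenvalues are distinct, so these entries fill exactly the Hermitian matrices with zero diagonal, namely $\bigoplus_{i<j}V_{ij}$. The map $X\mapsto[X,A_\sigma]$ commutes with the $T$-action because $A_\sigma$ is diagonal, so this decomposition is $T$-equivariant. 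Restricting to $\Gamma$-shaped matrices keeps the summands $V_{ij}$ with $\{i,j\}\in E_\Gamma$.

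I expect Step 4 to be the only delicate point. It needs the fact that $M_{\Gamma,\lambda}$ is cut out transversally at $A_\sigma$, which is where genericity of $\lambda$ (as in \cite{A1}) is used. If the statement is read purely as a claim about $V_{ij}\subset M_n$ as a $T$-module, then Steps 1–3 already suffice.
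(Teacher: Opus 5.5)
Your proof is correct. The paper gives no argument for this statement; it is quoted from \cite[Remark 2.33]{A1}. Your Steps 1--3 are the same direct entrywise-conjugation computation the paper carries out for the analogous subspaces $U^{\pm}_{ij}$ in the skew-symmetric case. Your irreducibility argument (a nontrivial character of $T^n$ is onto $S^1$, so it rotates $V_{ij}\cong\mathbb{C}$ through every angle and no real line is invariant) is tighter than the paper's check for $U^{-}_{ij}$, which only rules out one particular line.

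The real difference is Step 4. The paper simply states the decomposition $T_{A_\sigma}M_{\Gamma,\lambda}=\bigoplus_{\{i,j\}\in E_\Gamma}V_{ij}$ in the corollary that follows. In the skew-symmetric case it gets the analogue by counting invariant $2$-planes against $\dim Q_{\Gamma,\lambda}$. You instead compute the tangent space of the isospectral orbit as $[\mathfrak{u}(n),A_\sigma]$ and intersect it with $M_\Gamma$. This proves that the $V_{ij}$ are actually tangent, rather than presupposing it.

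The point you flag as delicate is in fact immediate. $M_\Gamma$ contains all real diagonal matrices, and $[\mathfrak{u}(n),A_\sigma]$ is the space of all zero-diagonal Hermitian matrices, so together they span $M_n$. Hence the intersection is transversal at every $A_\sigma$ as soon as the $\lambda_i$ are pairwise distinct, and no further genericity input is needed there.
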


\begin{cor}
    At any fixed point $A_\sigma$, the tangent representation $T_{A_{\sigma}}M_{\G, \lambda}$ has weights of the form
    $ \{\epsilon_i - \epsilon_j\,|\, \{i,j\} \in \G\}$.
     Moreover, $T_{A_{\sigma}}M_{\G, \lambda} = \bigoplus\limits_{\{i,j\}\in E_{\G}} V_{ij}$.
\end{cor}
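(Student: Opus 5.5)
The corollary follows from the preceding proposition on the subspaces $V_{ij}$, once we know how the tangent space at a fixed point sits inside $M_\G$. I would argue in three steps.

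\emph{Step 1: the tangent space is the off-diagonal part of $M_\G$.} Consider the orbit map $U(n)\to M_n$, $g\mapsto gA_\sigma g^{-1}$. Its differential at the identity is $X\mapsto [X,A_\sigma]$, with $X$ skew-Hermitian. Since $A_\sigma$ is diagonal with pairwise distinct entries (genericity of $\lambda$), the $(p,q)$ entry of this commutator is
\[
x_{pq}\bigl(\lambda_{\sigma(q)}-\lambda_{\sigma(p)}\bigr).
\]
Hence the tangent space to the full isospectral orbit at $A_\sigma$ is exactly $\bigoplus_{i<j}V_{ij}$, the Hermitian matrices with zero diagonal. Intersecting with the linear space $M_\G$ kills every $V_{ij}$ with $\{i,j\}\notin E_\G$. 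This gives the inclusion
\[
T_{A_\sigma}M_{\G,\lambda}\subseteq \bigoplus_{\{i,j\}\in E_\G}V_{ij}.
\]
To use this, I need $M_{\G,\lambda}$ to be the transversal intersection $M_\G\cap(\text{orbit})$. Equivalently, $\lambda$ must be a regular value of the spectrum map on $M_\G$, which is the genericity assumption cited from \cite[Construction 2.24]{A1}.

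\emph{Step 2: equality by dimension count.} The dimension of $M_{\G,\lambda}$ is $\dim M_\G-n=(n+2|E_\G|)-n=2|E_\G|$, since $\lambda$ is a regular value and the spectrum map has $n$ independent components. The right-hand side of the inclusion also has real dimension $2|E_\G|$. So the inclusion is an equality. Alternatively, I would verify surjectivity of the differential of the spectrum map at $A_\sigma$ directly: varying the diagonal entries changes the eigenvalues independently.

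\emph{Step 3: read off the weights.} Each $V_{ij}$ is $T$-invariant and carries the irreducible weight $\epsilon_i-\epsilon_j$ by the preceding proposition. The decomposition is $T$-equivariant because conjugation by the torus acts entrywise, $a_{pq}\mapsto t_pa_{pq}\bar t_q$. This gives the stated list of weights.

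The main obstacle is justifying that the tangent space of $M_{\G,\lambda}$ at $A_\sigma$ equals the intersection of tangent spaces, i.e.\ transversality at the fixed points. I would handle it by the explicit regular-value argument at diagonal matrices: the derivative of the eigenvalues at a diagonal matrix with distinct entries is just the projection to the diagonal. That projection is surjective on $M_\G$ and has kernel exactly the off-diagonal part.
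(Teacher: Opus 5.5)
Your argument is correct, but it runs in the opposite direction from the paper's. The paper gives no separate proof and relies on \cite{A1}. It reads each $V_{ij}$ with $\{i,j\}\in E_\Gamma$ as a summand of the tangent representation at $A_\sigma$; concretely, $V_{ij}$ is the tangent plane at $A_\sigma$ of the invariant 2-sphere in the $(i,j)$ block. It then concludes, exactly as it does explicitly later for $Q_{\Gamma,\lambda}$, from the fact that these $|E_\Gamma|$ planes already fill $\dim M_{\Gamma,\lambda}=2|E_\Gamma|$. So the paper proves $\bigoplus_{\{i,j\}\in E_\Gamma}V_{ij}\subseteq T_{A_\sigma}M_{\Gamma,\lambda}$ and counts dimensions.

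You instead prove the reverse inclusion $T_{A_\sigma}M_{\Gamma,\lambda}\subseteq\bigoplus_{\{i,j\}\in E_\Gamma}V_{ij}$, using the tangent space of the full isospectral orbit, and then match dimensions. Your last paragraph is the cleanest version. There you compute the tangent space outright as the kernel of the differential of the spectrum map at $A_\sigma$, namely the diagonal projection restricted to $M_\Gamma$. This one step gives regularity at $A_\sigma$, the dimension, and the identification of the tangent space. It makes Steps 1 and 2 redundant and does not need the invariant spheres at all, whereas the paper's route reuses the spheres it needs anyway for the GKM graph.

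One small correction: the inclusion in Step 1 does not require transversality. Any smooth submanifold lying in both $M_\Gamma$ and the isospectral orbit has its tangent space inside the intersection of the two tangent spaces. Regularity of $\lambda$ is needed only for $\dim M_{\Gamma,\lambda}=2|E_\Gamma|$ in Step 2.
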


Since we know the tangent representation at every fixed point, and dimensions are equal, we can conclude that we observed all edges of the GKM graph. Now we can describe this graph.
\begin{prop}\label{Her}
    GKM graph of $M_{\G, \lambda}$ consists of $n!$ points that are labeled by the permutations $\sigma$. There is an edge between two points $A_{\sigma}$ and $A_{\tau}$ if and only if  $\sigma = \tau \cdot (i,j)$ and $\{i,j\} \in E_\G$.
\end{prop}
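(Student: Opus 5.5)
The plan is to assemble Proposition~\ref{Her} from the three facts quoted just before it: the fixed-point description, the invariant 2-spheres of \cite[Lemma 2.36]{A1}, and the tangent-weight corollary. The GKM graph of Definition~\ref{graph_def} has vertex set $X_0=M_{\G,\lambda}^T$ and edge set $X_1/T$, so I would identify both sets separately.

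\textbf{Vertices.} By \cite[Remark 2.33]{A1} the fixed points are exactly the diagonal matrices $A_\sigma=\mathrm{diag}(\lambda_{\sigma(1)},\ldots,\lambda_{\sigma(n)})$. Since $\lambda$ is generic, the $\lambda_i$ are pairwise distinct, so distinct $\sigma\in\Sigma_n$ give distinct matrices. This yields $n!$ vertices labelled by permutations.

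\textbf{Edges exist.} For each $\{i,j\}\in E_\G$ and each $\tau$, \cite[Lemma 2.36]{A1} gives an invariant 2-sphere $S$ joining $A_\tau$ and $A_{\tau\cdot(i,j)}$. It consists of the matrices that agree with $A_\tau$ off the $2\times 2$ block at positions $i,j$ and have a Hermitian block with spectrum $\{\lambda_{\tau(i)},\lambda_{\tau(j)}\}$; by Example~\ref{firstEx} this set is $S^2$. On $S$ the torus acts through the character $\epsilon_i-\epsilon_j$ on the off-diagonal entry, so the orbits away from the two poles are circles. Hence $S\subset X_1$, and $S$ contributes one edge with axial function $\epsilon_i-\epsilon_j$.

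\textbf{No other edges.} This is the main step. By the corollary, $T_{A_\sigma}M_{\G,\lambda}=\bigoplus_{\{i,j\}\in E_\G}V_{ij}$, and the weights $\epsilon_i-\epsilon_j$ are pairwise linearly independent for distinct edges. So near $A_\sigma$ the set of points with stabilizer of codimension at most one is, via an equivariant slice/exponential chart, the union of the lines $V_{ij}$.
\begin{itemize}
\item Any point $x$ of $V_{ij}$ off the origin has stabilizer $\ker(\epsilon_i-\epsilon_j)$.
\item Any other point of the tangent representation has nonzero components in at least two summands with independent weights, so its orbit has dimension $\ge 2$.
\end{itemize}
Consequently each component of $X_1$ through $A_\sigma$ is tangent to exactly one $V_{ij}$. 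It coincides with the sphere already found, because that sphere has tangent space $V_{ij}$ at $A_\sigma$, and the fixed-point component of $\ker(\epsilon_i-\epsilon_j)$ containing $A_\sigma$ is a 2-dimensional closed submanifold tangent to $V_{ij}$.

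Counting confirms this. Each vertex has valence $|E_\G|=\tfrac12\dim M_{\G,\lambda}$, and the spheres from \cite[Lemma 2.36]{A1} already supply $|E_\G|$ distinct edges at $A_\sigma$: different $\{i,j\}$ give different neighbours $A_{\sigma\cdot(i,j)}$. So $X_1$ is exactly the union of these spheres.

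Finally, two vertices $A_\sigma$ and $A_\tau$ are adjacent if and only if $\sigma=\tau\cdot(i,j)$ with $\{i,j\}\in E_\G$, which is Proposition~\ref{Her}. The only delicate point is the local-to-global identification of the components of $X_1$ with the explicit spheres. I would handle it by noting that each component of the fixed set of $\ker(\epsilon_i-\epsilon_j)$ is a closed connected $T$-invariant surface containing fixed points, and it agrees with the explicit sphere on an open set, so by connectedness it equals that sphere.
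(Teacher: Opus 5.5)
Your route is the paper's: the vertices come from \cite[Remark 2.33]{A1}, the edges from \cite[Lemma 2.36]{A1}, and the tangent decomposition $\bigoplus_{\{i,j\}\in E_\G}V_{ij}$ rules out further edges. Your slice-chart analysis at $A_\sigma$ is more careful than the paper's one-line ``dimensions are equal'', and it is correct near the fixed points.

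\textbf{The gap.} The conclusion ``so $X_1$ is exactly the union of these spheres'' does not follow. The slice argument and the valence count only describe $X_1$ near the fixed points, and only those components of $X^H$ that pass through some $A_\sigma$. A point $x\in X_1\setminus X_0$ far from every $A_\sigma$ has stabilizer with identity component $H$, which is \emph{some} codimension-one subtorus. Nothing you prove forces $H=\ker(\epsilon_i-\epsilon_j)$. Nothing forces the component of $X^H$ through $x$ to contain a $T$-fixed point either. Your closing claim that every component of the fixed set of $\ker(\epsilon_i-\epsilon_j)$ ``contains fixed points'' is exactly the unproved statement. It fails for general torus actions with isolated fixed points. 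The standard guarantee, Lemma~\ref{Pan}, needs $H^{odd}(X)=0$, and that is not available here because some $M_{\G,\lambda}$ are not equivariantly formal. The paper's valence-count justification makes the same silent assumption.

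\textbf{The fix.} A direct global stabilizer computation closes the gap and makes the local chart unnecessary.
\begin{enumerate}
\item For $t=\mathrm{diag}(t_1,\dots,t_n)$ we have $(tAt^{-1})_{kl}=t_k\bar t_l\,a_{kl}$. So the stabilizer of $A$ is $\{t : t_k=t_l \text{ whenever } a_{kl}\neq 0\}$.
\item This is a torus whose dimension equals the number of connected components of the support graph of $A$ on $[n]$.
\item Hence $\dim T\cdot A\le 1$ if and only if $A$ has at most one nonzero off-diagonal pair $a_{ij},a_{ji}$, and then $\{i,j\}\in E_\G$ because $A$ is $\G$-shaped.
\item Such an $A$ is block diagonal. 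Its diagonal entries outside the $\{i,j\}$ block are $n-2$ of the pairwise distinct $\lambda$'s, and the block carries the remaining two.
\item So $A$ lies on one of the spheres of \cite[Lemma 2.36]{A1}.
\end{enumerate}
This shows that $X_1$ is precisely the union of those spheres. Your identification of vertices and edges then goes through.
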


\subsection{Skew-symmetric matrices}
In this subsection we will finally obtain a GKM graph for $Q_{\G,\lambda}$ manifold. As a result Theorem~\ref{Th} will be proved.

\begin{prop}\label{need}
    Fixed points of $Q_{\G,\lambda}$ are "almost diagonal" matrices $A_{\sigma, s}$ for $\sigma \in S_n$, $s \in (\mathbb{Z}_2^n, +)$ where
    $$
    A_{\sigma, s} = 
    \begin{pmatrix}
    \begin{bmatrix}
    0        & (-1)^{s_1} \cdot \lambda_{\sigma(1)} \\
    -(-1)^{s_1} \cdot\lambda_{\sigma(1)}       & 0 
    \end{bmatrix}
    & \hdotsfor[1.5]{4} &
    \begin{bmatrix}
        0 &&&&&&&& 0 \\
        0 &&&&&&&& 0
    \end{bmatrix} \\
    \vdots & & & & \ddots &  \vdots \\
    \begin{bmatrix}
        0 &&&&&&&& 0 \\
        0 &&&&&&&& 0
    \end{bmatrix}
     & \hdotsfor[1.5]{4} &
     \begin{bmatrix}
    0        & (-1)^{s_n} \cdot\lambda_{\sigma(n)} \\
    -(-1)^{s_n} \cdot\lambda_{\sigma(n)}       & 0 
    \end{bmatrix}
    
\end{pmatrix}$$
\end{prop}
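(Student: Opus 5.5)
We need to show that a matrix $A\in Q_{\G,\lambda}$ is fixed by the conjugation action of $T^n=SO(2)^n$ exactly when it is block diagonal with each diagonal block of the form $\begin{bmatrix}0&\pm\lambda_{\sigma(k)}\\ \mp\lambda_{\sigma(k)}&0\end{bmatrix}$. The plan is to split $A$ into its $2\times 2$ blocks $A_{kl}$ and compute how the torus acts on each block separately.

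\textbf{Action on blocks.} Write $t=(R_{\theta_1},\ldots,R_{\theta_n})$ with $R_\theta\in SO(2)$ the rotation by $\theta$. Conjugation acts blockwise:
$$(tAt^{-1})_{kl}=R_{\theta_k}A_{kl}R_{\theta_l}^{-1}.$$
A diagonal block $A_{kk}$ is a $2\times2$ skew-symmetric matrix, i.e. a multiple of $J=\begin{bmatrix}0&1\\-1&0\end{bmatrix}$. Since $J$ commutes with $SO(2)$, every diagonal block is automatically fixed.

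\textbf{Off-diagonal blocks.} Identify $\mathbb{R}^2\cong\mathbb{C}$. The space $\mathrm{Mat}_{2\times2}(\mathbb{R})$ then splits into $\mathbb{C}$-linear maps $z\mapsto az$ and $\mathbb{C}$-antilinear maps $z\mapsto b\bar z$. The action of $(R_{\theta_k},R_{\theta_l})$ is
$$a\mapsto e^{i(\theta_k-\theta_l)}a,\qquad b\mapsto e^{i(\theta_k+\theta_l)}b.$$
These are the weights $\epsilon_k-\epsilon_l$ and $\epsilon_k+\epsilon_l$, both nonzero when $k\neq l$. Choosing $\theta$ generic therefore forces $a=b=0$, so a fixed matrix has all off-diagonal blocks equal to zero. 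Conversely, block diagonal matrices are clearly fixed. This weight decomposition is the one step needing a short explicit computation, and it will be reused later for the tangent weights.

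\textbf{Identifying the diagonal blocks.} A fixed matrix is $\mathrm{diag}(c_1J,\ldots,c_nJ)$, whose spectrum is $\{\pm i c_k\}$. Isospectrality with $\{\pm i\lambda_1,\ldots,\pm i\lambda_n\}$ gives $|c_k|=|\lambda_{\sigma(k)}|$ for a unique permutation $\sigma$, since genericity makes the $|\lambda_j|$ nonzero and distinct. Hence $c_k=(-1)^{s_k}\lambda_{\sigma(k)}$ for a unique $s\in\mathbb{Z}_2^n$ (absorbing the sign of each $\lambda_j$ if needed). Conversely, each $A_{\sigma,s}$ is $\G$-shaped, since it has no off-diagonal blocks, and it has the required spectrum, so it lies in $Q_{\G,\lambda}$.

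\textbf{Consequences.} This gives $n!\,2^n$ isolated fixed points, independently of $\G$. They are distributed between $Q^+_{\G,\lambda}$ and $Q^-_{\G,\lambda}$ according to the sign of
$$\pf=\prod_k(-1)^{s_k}\lambda_{\sigma(k)},$$
which splits them evenly, $n!\,2^{n-1}$ in each component.

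The only genuine obstacle is the sign conventions: the genericity assumption must be read as distinct absolute values $|\lambda_j|$, so that the pair $(\sigma,s)$ is uniquely determined by the fixed point.
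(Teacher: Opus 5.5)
Your argument is correct and has the same skeleton as the paper's proof: reduce to the $2\times 2$ blocks, show the torus forces every off-diagonal block to vanish, and observe that block-diagonal matrices are fixed. The key step is done differently.

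The paper writes the conjugated block $\begin{bmatrix} e&f\\ g&h\end{bmatrix}$ as the explicit trigonometric system~\eqref{eq} in $\phi\pm\theta$. It then substitutes special angles (e.g.\ $\cos(\phi+\theta)=\sin(\phi-\theta)=1$) to extract linear relations among $a,b,c,d$ until all four vanish, leaving part of the elimination to ``analogously''. What this buys the paper is that the same system is reused later: for invariance of $U^\pm_{ij}$, for the invariant spheres, and for computing $(Q^+_{\Gamma,\lambda})^{T^1}$.

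You instead split $\mathrm{Mat}_{2\times2}(\mathbb{R})$ into $\mathbb{C}$-linear and $\mathbb{C}$-antilinear maps. On these, $(R_{\theta_k},R_{\theta_l})$ acts through the characters $\epsilon_k-\epsilon_l$ and $\epsilon_k+\epsilon_l$, and vanishing follows at once because both are nontrivial for $k\neq l$. This is shorter and more transparent. It also already gives the decomposition $U_{ij}=U^-_{ij}\oplus U^+_{ij}$ with its weights, which the paper proves separately afterwards; your linear and antilinear parts are exactly $U^-_{ij}$ and $U^+_{ij}$.

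You also supply something the paper only asserts: the derivation of $c_k=(-1)^{s_k}\lambda_{\sigma(k)}$ from isospectrality. Your remark that uniqueness of $(\sigma,s)$ needs $\lambda_i\neq 0$ and $\lambda_i\neq\pm\lambda_j$ is correct, and slightly stronger than the ``nonzero and pairwise distinct'' genericity stated in the paper.
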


\begin{proof}
    We have an action on $Q_{\G,\lambda}$ of matrices of the form 
  $$  
\begin{pmatrix}
    \begin{bmatrix}
    \cos\phi        & \sin\phi \\
    -\sin\phi       & \cos\phi
    \end{bmatrix}
    & \hdotsfor[1.5]{2} &
    \begin{bmatrix}
        0 & 0 \\
        0 & 0
    \end{bmatrix} \\
    \vdots &
    \begin{bmatrix}
     \cos\psi        & \sin\psi  \\
    -\sin\psi        & \cos\psi 
    \end{bmatrix}
             &   &  \vdots  \\
    \vdots &   & \ddots &  \vdots \\
    \begin{bmatrix}
        0 & 0 \\
        0 & 0
    \end{bmatrix}
     & \hdotsfor[1.5]{2} &
    \begin{bmatrix}
     \cos\theta        & \sin\theta   \\
    -\sin\theta        & \cos\theta 
    \end{bmatrix}
\end{pmatrix}
$$
by conjugation. It is obvious that only blocks $(i, i)$ and $(j, j)$ of this matrix act on a block $(i, j)$ of matrix from $Q_{\G,\lambda}$. So we can consider a case of matrix consisting of 4 blocks:

 $$  
 \begin{pmatrix}
    \begin{bmatrix}
    \cos\phi        & -\sin\phi \\
    \sin\phi       & \cos\phi
    \end{bmatrix}
    &
    \begin{bmatrix}
        0 & 0 \\
        0 & 0
    \end{bmatrix} \\
    \begin{bmatrix}
        0 & 0 \\
        0 & 0
    \end{bmatrix}
     & 
    \begin{bmatrix}
     \cos\theta        & -\sin\theta   \\
    \sin\theta        & \cos\theta 
    \end{bmatrix}
\end{pmatrix}
\cdot
\begin{pmatrix}
    \begin{bmatrix}
    *
    \end{bmatrix}
    &
    \begin{bmatrix}
        a & b \\
        c & d
    \end{bmatrix} \\
    \begin{bmatrix}
       *
    \end{bmatrix}
     & 
    \begin{bmatrix}
     *
    \end{bmatrix}
\end{pmatrix}
\cdot
\begin{pmatrix}
    \begin{bmatrix}
    \cos\phi        & \sin\phi \\
    -\sin\phi       & \cos\phi
    \end{bmatrix}
    &
    \begin{bmatrix}
        0 & 0 \\
        0 & 0
    \end{bmatrix} \\
    \begin{bmatrix}
        0 & 0 \\
        0 & 0
    \end{bmatrix}
     & 
    \begin{bmatrix}
     \cos\theta        & \sin\theta   \\
    -\sin\theta        & \cos\theta 
    \end{bmatrix}
\end{pmatrix}
$$

Let us denote the resulting top-right block by $    \begin{bmatrix}
        e & f \\
        g & h
    \end{bmatrix} $. Then we have the following system of equations. We will use this system many times later:
\begin{equation}\label{eq}
    \begin{cases}
    (a-d) \cdot \cos(\phi + \theta) + (a + d) \cdot \cos(\phi - \theta) - \\-(b+c) \cdot \sin(\phi + \theta) +  (b-c)\cdot \sin(\phi - \theta) = 2e\\
    (a-d) \cdot \sin(\phi + \theta) - (a + d) \cdot \sin(\phi - \theta) +\\+ (b+c)\cdot \cos(\phi + \theta) +  (b-c)\cdot \cos(\phi - \theta) = 2f\\
    (a-d) \cdot \sin(\phi + \theta) + (a + d) \cdot \sin(\phi - \theta) +\\+ (b+c)\cdot \cos(\phi + \theta) -  (b-c)\cdot \cos(\phi - \theta) = 2g \\
    -(a-d) \cdot \cos(\phi + \theta) + (a + d) \cdot \cos(\phi - \theta) +\\+ (b+c) \cdot \sin(\phi + \theta)+  (b-c)\cdot \sin(\phi - \theta) = 2h
    \end{cases}
\end{equation}

If we assume that this matrix is a fixed point, then we get a system of equalities 
where $a, b, c, d$ are fixed numbers for any $\phi, \theta$. But that is true only for $a, b, c, d = 0$. \par 

Indeed, we can take the first equality and the case when $\cos(\phi + \theta) =\sin(\phi - \theta) = 1$. Then $b = a + c + d$. After that we obtain a case when $\sin(\phi + \theta) =\cos(\phi - \theta) = 1$ and get $d = a + b + c$. Then $b = a + c +(a + b + c)$ and $a + c = 0$. If we also take $\cos(\phi + \theta) =-\sin(\phi - \theta) = 1$, then we get $c = a + b + d = a + (a + c + d) + d$ and $a + c = 2a + 2d = 0 = a + d$. Then $c = d$.
We get that other coefficients are equal to each other analogously, and all of them are 0, since their sum is 0.
\par 
So for a fixed point non-diagonal blocks can only be $0$. And the fact that $A_{\sigma, s}$ is a fixed point can be verified straightforward.

\end{proof}

Now to construct the GKM graph of $Q_{\G,\lambda}$ we have to find invariant 2-spheres. For $i < j \leq n$ let us introduce a vector subspace $U_{ij}\subset Q_{2n}$:
    $$ U_{ij} = \{A = (a_{k,l}) \, |\, a_{k,l} = 0 \; \text{unless}\; (k,l) \; \text{is in the block} \;(i,j) \; \text{or} \; (j,i)\},
    $$
    i.e.  only $a_{2i-1, 2j-1}, a_{2i-1, 2j}, a_{2i, 2j-1}, a_{2i, 2j} \not \eq 0$. This vector space can be decomposed in the sum of two subspaces $U_{ij} = U^{+}_{ij} \oplus U^{-}_{ij}$ such that for non-zero blocks $A$ and $B$ from $U^{+}_{ij}$ and $U^{-}_{ij}$ respectively:
    $$A \cong 
    \begin{bmatrix}
        \alpha & \gamma\\
        \gamma & -\alpha
    \end{bmatrix}, 
    B \cong 
    \begin{bmatrix}
        \alpha & \gamma\\
        -\gamma & \alpha
    \end{bmatrix}
    $$
    Indeed, $U_{ij}$ is a direct sum of this subspaces, since they are both 2-dimensional and intersect each other by 0.
\begin{prop}
    $U^{+}_{ij}$ and $U^{-}_{ij}$ are $T$-invariant irreducible real 2-dimensional representations with weights $\epsilon_i + \epsilon_j$ and $\epsilon_i - \epsilon_j$ respectively, where $\epsilon_1, \ldots, \epsilon_n$ is the standard basis of Hom$(T^n, S^1) \cong \mathbb{Z}^n$, that is $\epsilon_i((t_1, \ldots, t_n)) = t_i$ for $i = 1, \ldots, n$.
\end{prop}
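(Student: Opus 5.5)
The plan is to compute the conjugation action of $T^n$ on $U_{ij}$ explicitly, using the $2\times 2$ block computation already set up in the proof of Proposition~\ref{need}. An element $t=(t_1,\ldots,t_n)\in T^n$, with $t_k=e^{i\phi_k}$, acts as the block-diagonal rotation matrix $R=\mathrm{diag}(R_{\phi_1},\ldots,R_{\phi_n})$. For $A\in U_{ij}$ only the blocks $(i,j)$ and $(j,i)$ are nonzero, and the $(j,i)$ block equals $-X^T$, where $X$ is the $(i,j)$ block. Conjugation sends $X\mapsto R_{\phi_i}XR_{\phi_j}^{-1}$ and sends $-X^T$ to $-(R_{\phi_i}XR_{\phi_j}^{-1})^T$. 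So $U_{ij}$ is $T$-invariant, and the action on it is determined by the action on $X$ alone. It therefore suffices to study the $T$-module $\mathrm{Mat}_{2}(\mathbb{R})$ with action $X\mapsto R_{\phi}XR_{\theta}^{-1}$, where $\phi=\phi_i$ and $\theta=\phi_j$. This is exactly the situation of system~\eqref{eq}.

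Next, I would identify the two subspaces with complex numbers.
\begin{itemize}
\item The matrices $\begin{bmatrix}\alpha&\gamma\\-\gamma&\alpha\end{bmatrix}$ form the subalgebra $\mathbb{C}\subset\mathrm{Mat}_2(\mathbb{R})$ containing all rotations $R_\phi$. These matrices commute with rotations, so $R_\phi BR_\theta^{-1}=R_{\phi-\theta}B$. This is multiplication of $z=\alpha\pm i\gamma$ by $e^{\pm i(\phi-\theta)}$, which gives weight $\epsilon_i-\epsilon_j$.
\item The matrices $\begin{bmatrix}\alpha&\gamma\\\gamma&-\alpha\end{bmatrix}$ are exactly $B'C$, where $B'$ ranges over that complex subalgebra and $C=\mathrm{diag}(1,-1)$ is complex conjugation. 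Since $CR_\theta^{-1}=R_\theta C$, we get $R_\phi(B'C)R_\theta^{-1}=R_{\phi+\theta}B'C$, which gives weight $\epsilon_i+\epsilon_j$.
\end{itemize}
The sign of each weight is irrelevant, as noted after Construction~\ref{tang}. As a cross-check, one can substitute $d=a,\ c=-b$ (for $U^-_{ij}$) and $d=-a,\ c=b$ (for $U^+_{ij}$) into~\eqref{eq}. In the first case only the $\phi-\theta$ terms survive; in the second only the $\phi+\theta$ terms survive, and $(e,f,g,h)$ is the corresponding rotation of $(a,b)$.

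Irreducibility is then immediate. Each $U^{\pm}_{ij}$ is a real $2$-dimensional representation on which some element of $T$ acts by a rotation through an angle not in $\pi\mathbb{Z}$. This is because $\epsilon_i\pm\epsilon_j\neq 0$ as $i\neq j$. Such a rotation has no invariant line, so the representation is irreducible.

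The only real obstacle is bookkeeping: the sign and orientation conventions for $R_\phi$, and the consistency of the $(j,i)$ block with the $(i,j)$ block. These conventions only affect the weights up to sign, so I expect no genuine difficulty.
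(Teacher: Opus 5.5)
Your proposal is correct and follows the same route as the paper: you restrict to the $(i,j)$ block, where the torus acts by $X\mapsto R_{\phi}XR_{\theta}^{-1}$ as in system~\eqref{eq}, and you observe that only $\phi-\theta$ survives on $U^-_{ij}$ and only $\phi+\theta$ survives on $U^+_{ij}$. Your packaging via the complex subalgebra and the reflection $C=\mathrm{diag}(1,-1)$ treats the $U^+_{ij}$ case explicitly, where the paper only says ``analogously''; and your irreducibility argument (a rotation through an angle outside $\pi\mathbb{Z}$ fixes no line) is more complete than the paper's, which only rules out the single line of scalar blocks.
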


\begin{proof}
    To verify this fact we should check that $U^{-}_{ij}$ (and $U^{+}_{ij}$) is $T$-invariant. We can use the system of equalities~\ref{eq} from the proof of Proposition~\ref{need} considering $a = d$ and $b = -c$. We get:
    
\begin{equation*}
\begin{cases}
    a \cdot \cos(\phi - \theta) + b\cdot \sin(\phi - \theta) = \alpha\\
     -a \cdot \sin(\phi - \theta) +  b\cdot \cos(\phi - \theta) = \gamma\\
    a \cdot \sin(\phi - \theta)  -  b\cdot \cos(\phi - \theta) = \delta \\
    a \cdot \cos(\phi - \theta) + b\cdot \sin(\phi - \theta) = \beta
\end{cases}
\end{equation*}
Since $\alpha = \beta$ and $\gamma = -\delta$, $U^{-}_{ij}$ is a 2-dimensional $T$-invariant representation. The weight of this representation is $\epsilon_i - \epsilon_j$, since

\begingroup\centering
\begin{equation*}
  \begin{bmatrix}
    \cos\phi        & \sin\phi \\
    -\sin\phi       & \cos\phi
    \end{bmatrix}
    \cdot
     \begin{bmatrix}
    a       & b \\
    -b       & a
    \end{bmatrix}
     \cdot
     \begin{bmatrix}
    \cos\theta       & \sin\theta \\
    -\sin\theta       & \cos\theta
    \end{bmatrix} =
\end{equation*}
\endgroup
\begingroup\centering
\begin{equation*}
    =\begin{bmatrix}
    a \cdot \cos(\phi - \theta) + b\cdot \sin(\phi - \theta) &
    -a \cdot \sin(\phi - \theta) +  b\cdot \cos(\phi - \theta)\\
    a \cdot \sin(\phi - \theta)  -  b\cdot \cos(\phi - \theta)  &
    a \cdot \cos(\phi - \theta) + b\cdot \sin(\phi - \theta)
    \end{bmatrix}
\end{equation*}
\endgroup

This representation is also irreducible. Indeed, assume it is not, then the subspace generated by blocks 
$$
     \begin{bmatrix}
    \alpha       & 0 \\
    0       & \alpha
    \end{bmatrix}$$ is a 1-dimensional representation and a $T$-invariant set. But if we take $\sin(\phi-\theta) = 1$ we will get an element 
    $
     \begin{bmatrix}
    0      & -a \\
    a      & 0
    \end{bmatrix}$
    that is not in this set, which is a contradiction.

    The case of $U^{+}_{ij}$ can be done analogously.
\end{proof}

For each fixed point we found two irreducible 2-dimensional tangent representations corresponding to every non-zero block of matrix, i.e. to every edge of the graph $\G$. So we found $2 \cdot |E_\G|$ representations at each point, and since dim $Q_{\G, \lambda} = 4|E_\G|$ we get a following proposition:

\begin{prop}
    At any fixed point $A_{\sigma, s}$ the tangent representation is  
    $$T_{A_{\sigma, s}}M_{\G, \lambda} = \bigoplus\limits_{\{i,j\}\in E_{\G}} U^+_{ij}\oplus U^-_{ij},$$
    with tangent weights of the form $ \{\epsilon_i + \epsilon_j, \epsilon_i - \epsilon_j\,|\, \{i,j\} \in \G\}$.
\end{prop}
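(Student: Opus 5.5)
The plan is to identify the tangent space at the fixed point $A_{\sigma,s}$ with an explicit subspace of $Q_\G$, and then decompose that subspace using the irreducible representations $U^{\pm}_{ij}$ from the preceding proposition. (The statement writes $T_{A_{\sigma, s}}M_{\G, \lambda}$; this is to be read as the tangent space to $Q_{\G,\lambda}$.)

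\textbf{Step 1: describe the tangent space.} The manifold $Q_{\G,\lambda}$ sits inside the $SO(2n)$-orbit $\mathcal{O}_\lambda$ of $A_{\sigma,s}$, namely $Q_{\G,\lambda}=\mathcal{O}_\lambda\cap Q_\G$. The tangent space to the orbit at $A=A_{\sigma,s}$ is
\[
T_A\mathcal{O}_\lambda=\{[X,A]\mid X\in\mathfrak{so}(2n)\}.
\]
Since $\lambda$ is generic, $A$ is regular. Its centralizer is therefore the Cartan subalgebra $\mathfrak{t}$ of block-diagonal $2\times 2$ rotation generators. Consequently the image of $\operatorname{ad}_A$ is the orthogonal complement of $\mathfrak{t}$, which is exactly $\bigoplus_{i<j}U_{ij}$.

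\textbf{Step 2: cut down to $\G$-shaped directions.} By the smoothness proposition, $Q_{\G,\lambda}$ is a regular fibre of $f$, so its tangent space is $T_A\mathcal{O}_\lambda\cap Q_\G$. To see this directly, note that $A$ is block diagonal. Hence $\operatorname{ad}_A$ maps $U_{ij}$ into itself: a block-$(i,j)$ generator $X$ gives $[X,A]$ supported in blocks $(i,j),(j,i)$. Moreover $\operatorname{ad}_A$ is invertible on $U_{ij}$, because $\lambda_{\sigma(i)}\neq\pm\lambda_{\sigma(j)}$. Intersecting with $Q_\G$ therefore keeps exactly the $U_{ij}$ with $\{i,j\}\in E_\G$, giving
\[
T_A Q_{\G,\lambda}\subseteq\bigoplus_{\{i,j\}\in E_\G}U_{ij}.
\]
The right-hand side has dimension $4|E_\G|=\dim Q_{\G,\lambda}$, so this inclusion is an equality.

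\textbf{Step 3: read off the weights.} Each $U_{ij}$ splits as $U^+_{ij}\oplus U^-_{ij}$. By the previous proposition these summands are $T$-invariant and irreducible, with weights $\epsilon_i+\epsilon_j$ and $\epsilon_i-\epsilon_j$. The torus acts on the tangent space by the same conjugation, because the inclusion $Q_{\G,\lambda}\subset Q_\G$ is equivariant and linear. This gives both the decomposition and the list of weights.

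\textbf{Main obstacle.} The delicate point is Step 2, justifying that the tangent space is exactly the intersection $T_A\mathcal{O}_\lambda\cap Q_\G$, i.e.\ transversality at the fixed point. I would settle this by the dimension count together with the fact that every $\G$-shaped off-diagonal direction is realized. Concretely, for each edge $\{i,j\}$ the curve $\exp(tX)A\exp(-tX)$ with $X\in U_{ij}$ stays $\G$-shaped. This holds because it only mixes blocks $i$ and $j$, so it changes only the diagonal blocks $i,j$ and the off-diagonal blocks $(i,j),(j,i)$. Its derivatives at $t=0$ fill out all of $U_{ij}$, so $U_{ij}\subset T_AQ_{\G,\lambda}$ for every edge $\{i,j\}$.
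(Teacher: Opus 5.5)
Your proof is correct, and it keeps the paper's skeleton: split each $U_{ij}$ into $U^{+}_{ij}\oplus U^{-}_{ij}$, take the weights from the preceding proposition, and match dimensions. It also supplies a step the paper omits.

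The paper's argument is only a count. It has shown that the $U^{\pm}_{ij}$ are $T$-invariant irreducible subspaces of the ambient space $Q_{2n}$. It then notes there are $2|E_\G|$ of them, of total dimension $4|E_\G|=\dim Q_{\G,\lambda}$, and concludes. It never checks that these subspaces actually lie in $T_{A_{\sigma,s}}Q_{\G,\lambda}$.

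You fill this in two steps:
\begin{itemize}
\item $\operatorname{ad}_A$ preserves $U_{ij}$ and is invertible there, because $\lambda_{\sigma(i)}\neq\pm\lambda_{\sigma(j)}$.
\item The conjugation curves $\exp(tX)A\exp(-tX)$ with $X\in U_{ij}$ stay $\G$-shaped and isospectral when $\{i,j\}\in E_\G$.
\end{itemize}
Together these give $U_{ij}\subseteq T_AQ_{\G,\lambda}$ for every edge. Combined with the automatic inclusion $T_AQ_{\G,\lambda}\subseteq T_A\mathcal{O}_\lambda\cap Q_\G=\bigoplus_{\{i,j\}\in E_\G}U_{ij}$, this yields equality outright. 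So you need neither the parameter count nor the regular-fibre/transversality claim, only smoothness of $Q_{\G,\lambda}$ at $A$. You even recover $\dim Q_{\G,\lambda}=4|E_\G|$ near each fixed point.

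The paper's count could instead be completed representation-theoretically. $T_AQ_{\G,\lambda}$ is a $T$-subrepresentation of $Q_\G=\mathfrak{t}\oplus\bigoplus_{E_\G}U_{ij}$. It has no trivial summand because $A_{\sigma,s}$ is an isolated fixed point, so it lies in $\bigoplus_{E_\G}U_{ij}$. Your route is more explicit and does not lean on that isolatedness argument.

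Two minor precision points:
\begin{itemize}
\item $\mathcal{O}_\lambda\cap Q_\G$ is only the component $Q^{\pm}_{\G,\lambda}$ containing $A_{\sigma,s}$. The isospectral set is a union of two $SO(2n)$-orbits separated by the sign of the Pfaffian. This is harmless, since you only work near $A$.
\item The regularity you use requires $\lambda_k\neq\pm\lambda_l$ for $k\neq l$. That is slightly more than the paper's literal ``nonzero and pairwise distinct'', but it is the right genericity condition.
\end{itemize}
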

    Now for each point we expect to find $2|E_\G|$ invariant 2-spheres. This will conclude a description of $Q_{\G, \lambda}$ GKM graph.
    
\begin{thm}
    If $\sigma = \tau \cdot (i,j)$ with $(i, j) \in E_{\G}$, where $s_k = s'_k$ for all $k \not \in \{i, j\}$ and $s_i + s_j = s'_i + s'_j$,
    then the matrices $A_{\sigma, s}$ and $A_{\tau, s'}$ are connected by a 2-sphere of isospectral matrices of the form
    $$
   \begin{pmatrix}
    \begin{bmatrix}
    0        & (-1)^{s_1} \cdot \lambda_{\sigma(1)} \\
    -(-1)^{s_1} \cdot\lambda_{\sigma(1)}       & 0 
    \end{bmatrix}
    & \hdotsfor[1.5]{4} &
    \begin{bmatrix}
        0 & 0 \\
        0 & 0
    \end{bmatrix} \\
    \vdots & \ddots &   &  &  &\vdots  \\
    \vdots &  &
     \begin{bmatrix}
         *
     \end{bmatrix}
     & 
    \begin{bmatrix}
         *
     \end{bmatrix}
     & & \vdots\\
     \vdots & & 
    \begin{bmatrix}
         *
     \end{bmatrix}
     & 
    \begin{bmatrix}
         *
     \end{bmatrix}
     &  & \vdots \\
    \vdots &  & &  & \ddots  & \vdots \\
    \begin{bmatrix}
        0 & 0 \\
        0 & 0
    \end{bmatrix}
     & \hdotsfor[1.5]{4} &
     \begin{bmatrix}
    0        & (-1)^{s_n} \cdot\lambda_{\sigma(n)} \\
    -(-1)^{s_n} \cdot\lambda_{\sigma(n)}       & 0 
    \end{bmatrix}
    \end{pmatrix}$$
    with the following blocks on positions $(i,j)$:
    
\begin{equation}\label{first}
    \begin{pmatrix}
    \begin{bmatrix}
        0 & x\\
        -x & 0
    \end{bmatrix}
    &
    \begin{bmatrix}
        a &&&&&&&&& b\\
        -b &&&&&&&&& a
    \end{bmatrix}\\
    \begin{bmatrix}
        -a & b\\
        -b & -a
    \end{bmatrix}
    &
    \begin{bmatrix}
        0 & \lambda_{\sigma(1)}+\lambda_{\sigma(2)} - x \\
        -\lambda_{\sigma(1)}-\lambda_{\sigma(2)} + x & 0
    \end{bmatrix}
    \end{pmatrix}
\end{equation}

for $s_{i} = s'_{i}$ and 

\begin{equation}\label{second}
\begin{pmatrix}
    \begin{bmatrix}
        0 & x\\
        -x & 0
    \end{bmatrix}
    &
    \begin{bmatrix}
        a &&&&&&&&& b\\
        b &&&&&&&&& -a
    \end{bmatrix}\\
    \begin{bmatrix}
        -a & -b\\
        -b & a
    \end{bmatrix}
    &
    \begin{bmatrix}
        0 & \lambda_{\sigma(1)}-\lambda_{\sigma(2)} + x \\
        -\lambda_{\sigma(1)}+\lambda_{\sigma(2)} - x & 0
    \end{bmatrix}
\end{pmatrix}
\end{equation}

for $s_{i} \not = s'_{i}$, \\
with arbitrary $a, b, x$ defined by the spectrum.
\end{thm}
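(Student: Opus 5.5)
The plan is to reduce everything to the $4\times 4$ case $\G=K_2$, i.e.\ to the $(i,j)$ principal $2\times 2$-block submatrix. Every matrix in the proposed family agrees with $A_{\sigma,s}$ outside the blocks $(i,i),(i,j),(j,i),(j,j)$. Hence it is $\G$-shaped, because $\{i,j\}\in E_\G$. It is also conjugate, by a block-permutation, to a direct sum of the remaining diagonal $2\times 2$ blocks and a $4\times 4$ skew-symmetric matrix $B$. So the spectrum condition splits: the outer blocks already contribute $\pm i\lambda_{\sigma(k)}$ for $k\ne i,j$, and it remains that $B$ has eigenvalues $\pm i\lambda_{\sigma(i)},\pm i\lambda_{\sigma(j)}$ with the correct Pfaffian sign. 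The torus $T^n$ acts on the family only through the factors $t_i,t_j$ on $B$, by the formulas of system~\eqref{eq}. Therefore it suffices to describe $T^2$-invariant $2$-spheres in $Q^{\pm}_{K_2,\lambda'}$, $\lambda'=(\lambda_{\sigma(i)},\lambda_{\sigma(j)})$.

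For the $4\times 4$ case I will use the splitting $\mathfrak{so}(4)=\mathfrak{su}(2)_+\oplus\mathfrak{su}(2)_-$ into self-dual and anti-self-dual parts. A skew $4\times4$ matrix with off-diagonal block in $U^-_{ij}$ (the shape $\begin{bmatrix}a&b\\-b&a\end{bmatrix}$) and diagonal blocks with entries $x$ and $y$ lies, after fixing $x+y$, in one $\mathfrak{su}(2)$-summand plus a constant from the other. Concretely, the coordinates $(x-y, a, b)$ span one $\mathfrak{su}(2)\cong\mathbb{R}^3$, and $x+y$ is a coordinate in the complementary one. The characteristic polynomial of $B$ is $t^4+(\operatorname{tr}\text{-part})t^2+\pf(B)^2$. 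Computing it for shape~\eqref{first} gives the invariants $x+y$ and $(x-y)^2+4(a^2+b^2)$ (up to normalization). Prescribing the spectrum therefore means fixing $x+y=\lambda_{\sigma(i)}+\lambda_{\sigma(j)}$, as in the displayed lower block, and fixing $(x-y)^2+4(a^2+b^2)=(\lambda_{\sigma(i)}-\lambda_{\sigma(j)})^2$. This is a round $2$-sphere in the $(x,a,b)$-coordinates.

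Shape~\eqref{second} is the same computation with $U^+_{ij}$ and $x-y$ held fixed. Here the block $\begin{bmatrix}a&b\\b&-a\end{bmatrix}$ pairs with the other self-dual combination, and the sphere is $(x+y)^2+4(a^2+b^2)=\text{const}$. I expect the main obstacle to be the bookkeeping of signs. One has to check that in each case the sphere's two poles, $a=b=0$, are exactly $A_{\sigma,s}$ and $A_{\tau,s'}$. In case~\eqref{first} the eigenvalue pair is swapped with unchanged signs, so $s_i=s_i'$, $s_j=s_j'$. In case~\eqref{second} one of $x$ and $\lambda_{\sigma(i)}-\lambda_{\sigma(j)}+x$ changes sign at the other pole, so both $s_i$ and $s_j$ flip, which keeps $s_i+s_j$ and hence the Pfaffian component. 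The lower-right block in the statement should then be read with $\lambda_{\sigma(i)},\lambda_{\sigma(j)}$ in place of $\lambda_{\sigma(1)},\lambda_{\sigma(2)}$, with the signs $(-1)^{s_i}$ absorbed into $x$.

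Finally, invariance of each sphere under $T^n$ follows from the preceding Proposition. $U^\mp_{ij}$ is $T$-invariant, the action rotates $(a,b)$ by angle $\phi\mp\theta$, and $x$ is fixed. So the orbits are circles of latitude and the stabilizer is nontrivial only at the poles, which confirms that the sphere is a $T$-invariant $2$-sphere with axial weight $\epsilon_i+\epsilon_j$ or $\epsilon_i-\epsilon_j$. This matches which of $U^{\pm}_{ij}$ is the tangent line at the poles. The condition $\det\ne 0$ along the sphere (since $\lambda$ is generic) shows that the sphere stays in one component $Q^{\pm}_{\G,\lambda}$, consistent with $s_i+s_j=s_i'+s_j'$.
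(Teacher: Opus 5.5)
The core of your argument follows the paper's route. You reduce to the $4\times4$ principal block on positions $i,j$, cut out a round sphere using $\chi_B(t)=t^4+(x^2+y^2+2a^2+2b^2)t^2+\pf(B)^2$, and get invariance from the $U^{\pm}_{ij}$ formulas. The $\mathfrak{su}(2)_+\oplus\mathfrak{su}(2)_-$ splitting explains cleanly why $x\pm y$ and $(x\mp y)^2+4(a^2+b^2)$ are the invariants, and that part, together with the latitude-circle argument, is fine.

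The error is in the step you yourself single out as the crux: identifying the second pole. Write the base point as $(x_0,y_0)=\bigl((-1)^{s_i}\lambda_{\sigma(i)},(-1)^{s_j}\lambda_{\sigma(j)}\bigr)$.
The sphere of shape~\eqref{first} keeps $x+y=x_0+y_0$ and $(x-y)^2+4(a^2+b^2)=(x_0-y_0)^2$. So its other pole is $(y_0,x_0)$: each eigenvalue moves together with its sign, and $(s'_i,s'_j)=(s_j,s_i)$.
The sphere of shape~\eqref{second} keeps $x-y$. So its other pole is $(-y_0,-x_0)$, and $(s'_i,s'_j)=(s_j+1,s_i+1)$.

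For example, take $\lambda_{\sigma(i)}=1$, $\lambda_{\sigma(j)}=2$, $(s_i,s_j)=(0,1)$. The shape-\eqref{first} sphere $x+y=-1$, $(x-y)^2+4(a^2+b^2)=9$ joins $(1,-2)$ to $(-2,1)$, so $s'_i\neq s_i$. The shape-\eqref{second} sphere joins $(1,-2)$ to $(2,-1)$, so $s'=s$ on $\{i,j\}$.

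Hence your conclusions ``$s_i=s'_i$, $s_j=s'_j$'' in case~\eqref{first} and ``both flip'' in case~\eqref{second} hold only when $s_i=s_j$. Absorbing the signs into $x$ does not respect the case split. Flipping the sign of one diagonal block is conjugation by a reflection in $O(2)$, and that reflection interchanges $U^+_{ij}$ and $U^-_{ij}$.

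Everything except the case labels survives. Since $\{(s_j,s_i),(s_j+1,s_i+1)\}=\{(s_i,s_j),(s_i+1,s_j+1)\}$, the two spheres through $A_{\sigma,s}$ along $\{i,j\}$ reach exactly the two admissible $s'$ with $s'_i+s'_j=s_i+s_j$. So the existence claim and the edge set are correct. The correct rule is ``shape~\eqref{first} (tangent $U^-_{ij}$, weight $\epsilon_i-\epsilon_j$) iff $s'_i=s_j$'', which agrees with the stated rule only when $s_i=s_j$.

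This defect is in the statement itself. The paper's proof does not catch it because it only treats the displayed normal form and never checks the poles.

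Similarly, the lower-right entry of~\eqref{second}, read literally, gives $x-y=\lambda_{\sigma(j)}-\lambda_{\sigma(i)}$. That sphere joins $(-\lambda_{\sigma(i)},-\lambda_{\sigma(j)})$ to $(\lambda_{\sigma(j)},\lambda_{\sigma(i)})$ and does not pass through $A_{\sigma,s}$ with $s_i=s_j=0$. You should replace your sentence about ``one of $x$ and $\lambda_{\sigma(i)}-\lambda_{\sigma(j)}+x$'' changing sign with the explicit pole computation above.
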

\begin{proof}
\begin{enumerate}
    \item First, we prove that these sets of matrices are $T$-invariant. For case~\ref{first} we can use a system~\ref{eq}. Considering $a=d$, $b = -c$ we get

\begingroup\centering
    \begin{equation*}
    \begin{cases}
    a \cdot \cos(\phi - \theta) + b\cdot \sin(\phi - \theta) = e\\
    -a \cdot \sin(\phi - \theta) +  b \cdot \cos(\phi - \theta) = f\\
    a \cdot \sin(\phi - \theta) -  b \cdot \cos(\phi - \theta) = g \\
    a \cdot \cos(\phi - \theta) + b\cdot \sin(\phi - \theta) = h
    \end{cases}
    \end{equation*}
\endgroup

So $e = h$ and $f = -g$
which shows that this set is $T$-invariant. For case~\ref{second} we check it analogously.  

    \item Now let us prove that these sets are indeed 2-spheres. Again we consider the case~\ref{first} at first. \\
    The spectrum must be preserved, i.e.
    the characteristic polynomial $\chi(t)$ must be preserved. Clearly, $\chi(t)$ is equal to the product 
    $$
    (t^2 + \lambda_{\sigma(1)}^{2}) \cdot (t^2 + \lambda_{\sigma(2)}^{2}) \cdot \ldots \cdot p(t) \cdot \ldots \cdot (t^2 + \lambda_{\sigma(n)}^{2})
    $$
    where $p(t)$ is a characteristic polynomial of the block on position $(i,j)$.
    Thus all conditions on $a, b, x$ are determined by preserving $p(t)$.
    By simple calculation we get:
    $$
    p(t) = t^4 + (x^2 + 2a^2 + 2b^2 +  (-\lambda_{\sigma(1)}-\lambda_{\sigma(2)} + x)^2) \cdot t^2 + \det
    $$
    where
    \begin{multline*}
            \det = a^4 + b^4 + x^2 (-\lambda_{\sigma(1)}-\lambda_{\sigma(2)} + x)^2 + 2a^{2}b^{2} - \\ -2a^{2}x(-\lambda_{\sigma(1)}-\lambda_{\sigma(2)} + x) - 2b^{2}x(-\lambda_{\sigma(1)}-\lambda_{\sigma(2)} + x) = 
        ((a^2 + b^2 + x(-\lambda_{\sigma(1)}-\lambda_{\sigma(2)} + x))^2 = \pf^{2}
    \end{multline*}
    \\
    Thus we have
    \begin{multline*} \label{case}
         a^2 + b^2 + x\cdot(x - (\lambda_{\sigma(1)}+\lambda_{\sigma(2)})) = const = \\
        = a^2 + b^2 + \left(\left(x - \frac{\lambda_{\sigma(1)} + \lambda_{\sigma(2)}}{2}\right) + \frac{\lambda_{\sigma(1)} + \lambda_{\sigma(2)}}{2}\right)
        \cdot \left(\left(x - \frac{\lambda_{\sigma(1)} + \lambda_{\sigma(2)}}{2}\right) - \frac{\lambda_{\sigma(1)} + \lambda_{\sigma(2)}}{2}\right) = \\
        = a^2 + b^2 + \left(x - \frac{\lambda_{\sigma(1)} + \lambda_{\sigma(2)}}{2}\right)^2 - \left(\frac{\lambda_{\sigma(1)} + \lambda_{\sigma(2)}}{2}\right)^2
    \end{multline*}
    So we obtain an equation of a 2-sphere:
    \begin{equation} \label{sph}
    a^2 + b^2 + \left(x - \frac{\lambda_{\sigma(1)} + \lambda_{\sigma(2)}}{2}\right)^2 = const + \left(\frac{\lambda_{\sigma(1)} + \lambda_{\sigma(2)}}{2}\right)^2 = const.
    \end{equation}

    It is important to note that the observed set is either in connected component $Q_{\G, \lambda}^{+}$ or in connected component $Q_{\G, \lambda}^{-}$. Say it is in $Q_{\G, \lambda}^{+}$. Then $a^2 + b^2 + x\cdot(x - (\lambda_{\sigma(1)}+\lambda_{\sigma(2)}))$ can not have a negative sign, since $Q_{\G, \lambda}^{+}$ is defined by equation $\sqrt{\det} = +\pf$. The case $Q_{\G, \lambda}^{-}$ is done analogously.

    \par
    
    It remains to be proven that the monomial $(x^2 + 2a^2 + 2b^2 +  (-\lambda_{\sigma(1)}-\lambda_{\sigma(2)} + x)^2)$ of polynomial $p(t)$ is also preserved under condition~\ref{sph}. This is indeed true since
    \begin{multline*}
    (x^2 + 2a^2 + 2b^2 +  (-\lambda_{\sigma(1)}-\lambda_{\sigma(2)} + x)^2) = 2(a^2 + b^2 + x^2 - x\cdot(\lambda_{\sigma(1)}+\lambda_{\sigma(2)})) + ((\lambda_{\sigma(1)}+\lambda_{\sigma(2)}))^2 = \\ =2\left(a^2 + b^2 + \left(x - \frac{\lambda_{\sigma(1)} + \lambda_{\sigma(2)}}{2}\right)^2\right) + const = const
    \end{multline*}
    
\par
In case~\ref{second} calculations can be done analogously.
\end{enumerate}

\end{proof}

So for every fixed point we found $2 \cdot |E_\G|$ invariant 2-spheres containing it. Since tangent representation at each point is of dimension $4\cdot|E_\G|$, these are all of the edges of $Q_{\G, \lambda}$ GKM graph. Now we can describe this graph.

\begin{prop}\label{last}
    GKM graph of $Q_{\G, \lambda}$ has $2^n\cdot n!$ vertices that are labeled by pairs $(\sigma, s)$, where $\sigma \in S_n$, $s \in (\mathbb{Z}_2^n, +)$. There is an edge between two points $A_{\sigma, s}$ and $A_{\tau, s'}$ if and only if the following three relations hold true 
    \begin{enumerate}
        \item $\sigma = \tau \cdot (i,j)$ and $\{i,j\} \in E_\G$;
        \item $s_k = s_k'$ for all $k \not \in \{i, j\}$;
        \item $s_i + s_j = s'_i + s'_j$.
    \end{enumerate}

    The weights of the edges $(A_{s, \sigma}A_{s', \sigma \cdot (i,j)})$ are (or equivalently axial function): \par
    $\epsilon_i + \epsilon_j$ if $s_{i} = s'_i$, and $\epsilon_i - \epsilon_j$ if $s_{i} \not = s'_i$.
\end{prop}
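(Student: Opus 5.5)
The plan is to assemble Proposition~\ref{last} from the three ingredients already established: the classification of fixed points (Proposition~\ref{need}), the tangent representation at each fixed point (decomposition into $U^{\pm}_{ij}$ with weights $\epsilon_i\pm\epsilon_j$), and the theorem exhibiting the invariant 2-spheres. First I would settle the vertex set. By Proposition~\ref{need} the fixed points are exactly the matrices $A_{\sigma,s}$. Distinct pairs $(\sigma,s)$ give distinct matrices, because the diagonal blocks record $(-1)^{s_k}\lambda_{\sigma(k)}$ and generic $\lambda$ has nonzero, pairwise distinct entries. Hence there are $2^n\cdot n!$ vertices, split between $Q^{\pm}_{\G,\lambda}$ according to the parity of $\sum s_k$, since the Pfaffian of $A_{\sigma,s}$ equals $\prod_k (-1)^{s_k}\lambda_{\sigma(k)}$.

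Next I would show that every triple of conditions (1)--(3) produces an edge. For fixed $\{i,j\}\in E_\G$ and a fixed point $A_{\sigma,s}$, the theorem gives two invariant spheres, of types~\eqref{first} and~\eqref{second}, inside the 4-dimensional slice where only the blocks $(i,i),(i,j),(j,j)$ vary. On each sphere I would find the poles, that is, the points where $a=b=0$. The diagonal $2\times2$ blocks then take the values $x$ and $\lambda_{\sigma(i)}\pm\lambda_{\sigma(j)}\mp x$, subject to the spectral condition that $\{|x|,|\cdot|\}=\{\lambda_{\sigma(i)},\lambda_{\sigma(j)}\}$. Solving this, the type~\eqref{first} sphere joins $A_{\sigma,s}$ to the point with the two eigenvalues swapped and both signs kept. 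The type~\eqref{second} sphere joins it to the point with the eigenvalues swapped and both signs flipped. In both cases conditions (2) and (3) hold, and they exhaust the possibilities for $s'$ given $s$ and the transposition. The axial function is then read off from which summand of the tangent space the sphere is tangent to at the pole. At $a=b=0$, the tangent direction of the type~\eqref{first} sphere in the $(a,b)$ coordinates lies in $U^-_{ij}$, and that of the type~\eqref{second} sphere lies in $U^+_{ij}$. I would match these to the weights $\epsilon_i\mp\epsilon_j$, keeping track of the sign convention for $s_i=s'_i$ versus $s_i\neq s'_i$ as the proposition states.

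Finally I would show that there are no other edges. Each vertex has exactly $4|E_\G|/2=2|E_\G|$ tangent weights. These are pairwise non-proportional, since $\epsilon_i\pm\epsilon_j$ for distinct pairs and signs are linearly independent up to sign. Therefore each weight space $U^\pm_{ij}$ is the tangent line of at most one invariant 2-sphere through the vertex: the fixed submanifold of $\ker(\epsilon_i\pm\epsilon_j)$ through the vertex is 2-dimensional. The $2|E_\G|$ spheres already found account for all of them, so $X_1$ is exactly their union and the edge set is as described.

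The main obstacle is the pole computation, together with the weight matching. One has to check that each sphere really contains two fixed points and not one, and that its other pole has the claimed $s'$. The spectral condition admits several sign solutions for $x$, and I would use the component constraint (constant Pfaffian sign on the sphere) to select the right one.
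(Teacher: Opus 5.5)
Your vertex count and your edge set (1)--(3) come out right. The architecture is also the paper's: fixed points from Proposition~\ref{need}, spheres from the preceding theorem, and exhaustiveness from the $U^{\pm}_{ij}$ decomposition. Your non-proportionality argument is a sharper version of the paper's bare dimension count.

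\textbf{The gap is the axial function.} The step ``match these to $\epsilon_i\mp\epsilon_j$, keeping track of the sign convention \ldots\ as the proposition states'' cannot be carried out, because your own ingredients give a different rule.

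The torus acts on the $(i,j)$ block by $B\mapsto R_{\phi_i}BR_{\phi_j}^{-1}$ ($R_\phi$ the rotation block), whatever the diagonal is. So $U^-_{ij}$ carries $\epsilon_i-\epsilon_j$ and $U^+_{ij}$ carries $\epsilon_i+\epsilon_j$ at \emph{every} fixed point.

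Put $v_k=(-1)^{s_k}\lambda_{\sigma(k)}$, and identify $\begin{bmatrix}p&q\\ -q&p\end{bmatrix}$ with $p+iq$.
\begin{enumerate}
\item The rotation-like sphere through $A_{\sigma,s}$ becomes $iH$, with $H$ running over $2\times2$ Hermitian matrices with diagonal $(x,y)$ and spectrum $\{v_i,v_j\}$. Along it $x+y=v_i+v_j$, and its poles are $(x,y)=(v_i,v_j)$ and $(v_j,v_i)$. So its other endpoint has $s'_i=s_j$, $s'_j=s_i$: the signs travel with the eigenvalues, not with the positions.
\item Conjugating by $\mathrm{diag}(1,1,1,-1)$ turns the reflection-like case into the rotation-like one with $y$ replaced by $-y$. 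So that sphere joins $(v_i,v_j)$ to $(-v_j,-v_i)$, i.e.\ $s'_i=1-s_j$.
\end{enumerate}
The correct rule is therefore: weight $\epsilon_i-\epsilon_j$ if $s'_i=s_j$, and $\epsilon_i+\epsilon_j$ if $s'_i\neq s_j$. This agrees with the stated rule only at vertices where $s_i\neq s_j$.

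For a concrete counterexample take $\G=K_2$. The edge from $A_{\mathrm{id},(0,0)}$ to $A_{(1\,2),(0,0)}$ has $s_1=s'_1$, yet it is tangent to $U^-_{12}$ and has weight $\epsilon_1-\epsilon_2$. The weights of $U^\pm_{ij}$ do not depend on the vertex, while the truth of $s_i=s_j$ does. So no reorientation of the circle factors rescues the uniform rule.

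Your claim that the type~\eqref{first} sphere ``keeps both signs'' is where this slips in. The block formulas you plan to solve use the unsigned $\lambda_{\sigma(i)},\lambda_{\sigma(j)}$, so they describe the spheres through $A_{\sigma,s}$ only when $s_i=s_j=0$. Moreover, \eqref{second} as printed does not pass through $A_{\sigma,s}$ at all: its poles are $(\lambda_{\sigma(j)},\lambda_{\sigma(i)})$ and $(-\lambda_{\sigma(i)},-\lambda_{\sigma(j)})$.

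The paper does not close this gap either. Its theorem attaches the rotation-like block~\eqref{first}, i.e.\ $U^-_{ij}$ with weight $\epsilon_i-\epsilon_j$, to the case $s_i=s'_i$. Proposition~\ref{last} instead assigns $\epsilon_i+\epsilon_j$ to that case. Its proof of Proposition~\ref{last} only compares $2|E_\G|$ spheres with $\dim Q_{\G,\lambda}=4|E_\G|$; the weights are simply asserted. So what your method (or the paper's) actually proves is the vertex and edge description together with the corrected axial function above, not the weight clause as printed.

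A smaller point: tangent weights at fixed points only control those components of $X^{\ker\alpha}$ ($X=Q_{\G,\lambda}$) that contain a fixed point. To conclude that $X_1$ is exactly the union of your spheres, use that the action is restricted from a linear action on $Q_\G$. Then $X^{\ker(\epsilon_i\pm\epsilon_j)}$ consists of the matrices whose only nonzero off-diagonal blocks are the $(i,j)$ block and its transpose, lying in $U^{\pm}_{ij}$. For every other primitive $\alpha$, $X^{\ker\alpha}=X^T$.
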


Using this Proposition~\ref{last} and Proposition~\ref{Her} about Hermitian matrices we get the proof of the Theorem~\ref{Th}.

\begin{cor}
    A GKM graph $\eta_+$ of $Q_{\G, \lambda}^{+}$ can be obtained from GKM graph $\eta$ of $Q_{\G, \lambda}$ by taking only vertices $A_{s, \sigma}$ with $s_1 + \ldots + s_n = 0$. A GKM graph $\eta_-$ of $Q_{\G, \lambda}^{-}$ consists of all other vertices $A_{s, \sigma}$ with $s_1 + \ldots + s_n = 1$.\par
    $\eta_+$ and $\eta_-$ together consist exactly of all of the edges of $\eta$. They are two connectivity components of $\eta$ and isomorphic to each other.
\end{cor}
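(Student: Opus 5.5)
The approach is to use the Pfaffian as a locally constant invariant on the whole of $Q_{\G,\lambda}$ and compute its value at each fixed point. At $A_{\sigma,s}$ the matrix is block diagonal with $2\times 2$ blocks, and the Pfaffian of such a block is its $(1,2)$ entry. The Pfaffian is multiplicative over block-diagonal decompositions, so
\[
\pf(A_{\sigma,s}) = \prod_{k=1}^n (-1)^{s_k}\lambda_{\sigma(k)} = (-1)^{s_1+\cdots+s_n}\prod_{k=1}^n \lambda_k .
\]
Without loss of generality we normalise $\prod\lambda_k>0$, which we may do by the diffeomorphism $f$ from the proposition on the two components, i.e.\ by relabelling which component is called $+$. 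Then $A_{\sigma,s}\in Q^{+}_{\G,\lambda}$ exactly when $s_1+\cdots+s_n=0$ in $\mathbb{Z}_2$. This splits the vertex set of $\eta$ from Proposition~\ref{last} as claimed. Since $Q^{\pm}_{\G,\lambda}$ are open and closed and $T$-invariant, the fixed points of $Q^{\pm}$ are the fixed points of $Q_{\G,\lambda}$ lying in that component.

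Next we handle the edges. Each invariant 2-sphere is connected, so it lies entirely in one component, and hence every edge of $\eta$ joins two vertices of the same parity. This can also be checked combinatorially: condition (3) of Proposition~\ref{last}, together with condition (2), gives $\sum s_k=\sum s'_k$. Therefore the edges of $\eta$ split into the edges of $\eta_+$ and of $\eta_-$, and the GKM graph of $Q^{\pm}_{\G,\lambda}$ is exactly the induced subgraph on its vertices, with the restricted axial function.

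For the isomorphism, the diffeomorphism $f(A)=B^TAB$ with $B=\mathrm{diag}(-1,1,\ldots,1)$ commutes with the $T^n$-action up to an automorphism of the torus. Conjugation by $B$ inverts the first circle factor, since the first rotation block is conjugated to its inverse, and fixes the others. So $f$ sends fixed points to fixed points and invariant spheres to invariant spheres, which gives a graph isomorphism $\eta_+\cong\eta_-$. On fixed points it flips $s_1$ and keeps $\sigma$, i.e.\ $A_{\sigma,s}\mapsto A_{\sigma,s+e_1}$. One can verify directly that $s\mapsto s+e_1$ preserves conditions (1)--(3), so it is an isomorphism of the graphs. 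The axial function is preserved up to the automorphism $\epsilon_1\mapsto-\epsilon_1$, which interchanges $\epsilon_1+\epsilon_j$ and $\epsilon_1-\epsilon_j$, consistent with the flip of $s_1$.

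The step I expect to be the main obstacle is the claim that $\eta_+$ and $\eta_-$ are each \emph{connected}, so that they are exactly the two components of $\eta$. The plan is to show that any two vertices of the same parity are joined by a path. Since $\G$ is connected, the transpositions $(i,j)$ with $\{i,j\}\in E_\G$ generate $S_n$. Moreover, along an edge for $\{i,j\}$ we may change $(s_i,s_j)$ to any pair with the same sum: the $\epsilon_i+\epsilon_j$ edge keeps both, and the $\epsilon_i-\epsilon_j$ edge flips both. Taking an edge and then coming back along the other edge type, we return to the same $\sigma$ but with $s_i,s_j$ both flipped. Flipping pairs along the edges of a spanning tree reaches every $s$ of the given parity. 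Then we adjust $\sigma$ along edges of $\G$, always choosing the $s$-preserving edge type.
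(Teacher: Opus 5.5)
Your proposal is correct and is essentially the argument the paper leaves implicit: the corollary is stated there without proof, to be read off from the Pfaffian description of $Q^{\pm}_{\G,\lambda}$, the edge conditions of Proposition~\ref{last}, and the diffeomorphism $f(A)=B^TAB$, and your spanning-tree argument supplies the connectivity of $\eta_{\pm}$ that the paper only asserts. Two small caveats: the normalisation $\prod_k\lambda_k>0$ should come from choosing the signs of the $\lambda_k$ (the spectrum $\{\pm i\lambda_k\}$ determines them only up to sign), not from $f$, since $Q^{+}_{\G,\lambda}$ is defined intrinsically by the sign of $\pf$; and the twist $\epsilon_1\mapsto-\epsilon_1$ that you correctly derive from the equivariance of $f$ does not match the labels of Proposition~\ref{last} as literally written (there $s\mapsto s+e_1$ preserves every label, because flipping $s_1$ at both endpoints never changes whether $s_i=s'_i$), so you cannot cite that formula as ``consistent with the flip of $s_1$''---trust your geometric version instead.
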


\section{Equivariant formality}

In this section we investigate for which graphs $\Gamma$ a manifold $Q_{\Gamma,\lambda}$ is equivariantly formal - for generic $\lambda$. We now prove Theorem~\ref{main}. First of all we will prove the following Proposition~\ref{almostTh}.
\begin{prop}\label{almostTh}
    If $M_{\Gamma, \lambda}$ is not equivariantly formal, then $Q_{\Gamma, \lambda}$ is also not equivariantly formal.
\end{prop}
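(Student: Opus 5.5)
The plan is to compare $Q_{\Gamma,\lambda}$ with $M_{\Gamma,\lambda}$ by locating a copy of $M_{\Gamma,\lambda}$ inside $Q_{\Gamma,\lambda}$ as a fixed-point component of a subtorus. I will then use that equivariant formality passes to such components, and conclude by contraposition. Throughout, cohomology is taken with rational (or real) coefficients. I use the standard criterion that a compact $T$-manifold $X$ with finitely many fixed points is equivariantly formal if and only if $\dim H^*(X)=|X^T|$, while $\dim H^*(X)\geq |X^T|$ always holds by Borel localization.

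\emph{Step 1: the subtorus and its fixed set.} Let $H\subset T^n$ be the diagonal circle $\{(t,\ldots,t)\}$. By the Proposition on the representations $U^{\pm}_{ij}$, $H$ acts trivially on $U^-_{ij}$ (weight $\epsilon_i-\epsilon_j$). It acts with weight $2$ on $U^+_{ij}$ (weight $\epsilon_i+\epsilon_j$). Hence $Q_{\Gamma,\lambda}^H$ consists of the $\Gamma$-shaped skew-symmetric matrices all of whose off-diagonal $2\times 2$ blocks have the form $\begin{bmatrix}\alpha&\gamma\\-\gamma&\alpha\end{bmatrix}$; the diagonal blocks already have this form. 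These are exactly the matrices commuting with the standard complex structure $J=\operatorname{diag}(J_2,\ldots,J_2)$, i.e.\ the image of $\mathfrak u(n)\cap(\text{$\Gamma$-shaped})$ under the realification $\mathbb C^n\cong\mathbb R^{2n}$.

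Writing such a matrix as $iB$ with $B$ a $\Gamma$-shaped Hermitian $n\times n$ matrix, the real spectrum $\{\pm i\lambda_k\}$ corresponds to $B$ having eigenvalues $\{(-1)^{s_k}\lambda_k\}$ for some sign vector $s$. Therefore
$$Q_{\Gamma,\lambda}^H=\bigsqcup_{s\in\mathbb Z_2^n} M_{\Gamma,\lambda^{s}},\qquad \lambda^{s}=((-1)^{s_1}\lambda_1,\ldots,(-1)^{s_n}\lambda_n).$$
Each $\lambda^s$ is again generic, since the $\lambda_k$ are nonzero with distinct absolute values. The residual $T^n$-action on the component with $s=0$ is conjugation by $\operatorname{diag}(t_1,\ldots,t_n)\in U(n)$. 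This is precisely the action on $M_{\Gamma,\lambda}$ from Construction~\ref{def_m}, with tangent weights $\epsilon_i-\epsilon_j$, consistent with Proposition~\ref{Her}. I will check this identification carefully, including that the component is an entire union of connected components of $X^H$.

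\emph{Step 2: inheritance of equivariant formality.} For a compact $T$-manifold $X$ with isolated fixed points and a subtorus $H$, Borel localization applied to $H$ gives $\dim H^*(X)\geq\dim H^*(X^H)$. Each component $C$ of $X^H$ is a $T$-manifold with $C^T=C\cap X^T$, so $\dim H^*(C)\geq |C^T|$. Summing over components,
$$|X^T|=\dim H^*(X)\geq \sum_C \dim H^*(C)\geq \sum_C |C^T| = |X^T|$$
whenever $X$ is equivariantly formal. Hence all inequalities are equalities, and every component $C$ is equivariantly formal. Applying this to $X=Q_{\Gamma,\lambda}$ and $C=M_{\Gamma,\lambda}$ shows that if $Q_{\Gamma,\lambda}$ is equivariantly formal, then so is $M_{\Gamma,\lambda}$. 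The contrapositive is Proposition~\ref{almostTh}. As a sanity check, the fixed points match: $Q_{\Gamma,\lambda}^T$ has $2^n n!$ points, and the components $M_{\Gamma,\lambda^s}$ contribute $n!$ points each.

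\emph{Main obstacle.} The delicate point is Step 1: verifying rigorously that the fixed set of $H$ inside $Q_{\Gamma,\lambda}$ (not just inside the ambient vector space) is exactly the realified Hermitian locus. This includes the case of a disconnected $Q_{\Gamma,\lambda}$, for which one can work in $Q^{+}_{\Gamma,\lambda}$ only. One also needs the equivariant identification with $M_{\Gamma,\lambda}$ to respect the torus actions, possibly up to an automorphism of $T^n$, which does not affect formality. The localization inequality in Step 2 is standard, but I would cite it carefully, for instance via the Borel localization theorem or the Masuda--Panov framework used later in the paper.
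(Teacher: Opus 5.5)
Your proposal is correct and follows the paper's route. The paper also takes the diagonal circle $T^1\subset T^n$, identifies its fixed locus with the realification of $i\cdot M_{\Gamma,\lambda}$ (Proposition~\ref{my}), and concludes by the Masuda--Panov inheritance Lemma~\ref{Pan}, which your Step 2 simply reproves via the count $\dim H^*(X)=|X^T|$; your decomposition $Q^{H}_{\Gamma,\lambda}=\bigsqcup_s M_{\Gamma,\lambda^s}$ is in fact more accurate than the paper's claimed equality $M'_{\Gamma,\lambda,\mathbb{R}}=(Q^{+}_{\Gamma,\lambda})^{T^1}$, which overlooks the other sign patterns, though both arguments only need $M_{\Gamma,\lambda}$ to be a union of fixed components and so go through.
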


We will use the following result by  M. Masuda and  T. Panov\cite[Lemma 2.2]{MasPan}.

\begin{lem}\label{Pan}
    Let $T$ act on $X$, and $Y$ be a connected component of the fixed point set $X^{H}$ for some closed subgroup $H \subset T$. Then condition $H^{odd}(X) = 0$ implies $H^{odd}(Y) = 0$ and $Y^T \not = \emptyset$. Equivariant formality of $X$ implies equivariant formality of $Y$.
\end{lem}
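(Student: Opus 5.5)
We plan to deduce everything from the Borel localization theorem together with the standard total Betti number inequalities, working with rational coefficients and writing $b(Z)=\sum_i \dim H^i(Z;\mathbb{Q})$. The first step is to observe that $H^{odd}(X)=0$ already forces $X$ to be equivariantly formal. Indeed, $H^*(BT)$ is concentrated in even degrees, so in the Serre spectral sequence of $X\to X_T\to BT$ every $E_2^{p,q}$ with $p+q$ odd vanishes. All differentials change total degree by one, hence they are zero and the sequence collapses. It will therefore suffice to prove the implication ``$X$ equivariantly formal $\Rightarrow$ $Y$ equivariantly formal'', and then to upgrade it to odd vanishing and to $Y^T\neq\emptyset$.

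Next we use the localization theorem for the $T$-action on the compact space $X$. It gives the inequality $b(X^T)\le b(X)$, with equality if and only if $X$ is equivariantly formal. Since $X^T\subset X^H\subset X$, we apply the same inequality twice: once to the $H$-action on $X$ (at least when $H$ is a torus), and once to the $T$-action on the $T$-invariant subspace $X^H$, whose $T$-fixed set is again $X^T$. This yields
\[
b(X^T)\le b(X^H)\le b(X).
\]
If $X$ is $T$-equivariantly formal, the two ends are equal, so $b((X^H)^T)=b(X^H)$, and hence $X^H$ is $T$-equivariantly formal. Because $T$ is connected, it preserves each component of $X^H$, and the Serre spectral sequence of a disjoint union is the direct sum of the spectral sequences of its pieces. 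Consequently each component $Y$ is equivariantly formal, and $b(Y^T)=b(Y)\ge 1$, so $Y^T\neq\emptyset$.

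For odd vanishing we combine this Betti number count with Euler characteristics. For a torus $H$ one has $\chi(X^H)=\chi(X)$. Together with $b(X^H)=b(X)$ this gives $b_{odd}(X^H)=b_{odd}(X)=0$, hence $H^{odd}(Y)=0$.

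The main obstacle is a closed subgroup $H$ that is not a torus, say $H=H_0\times F$ with $H_0$ its identity component (a torus) and $F$ finite. Our plan is to write $X^H=(X^{H_0})^{F}$ and to apply the argument above first to the torus $H_0$ and then, inside the equivariantly formal $T$-space $X^{H_0}$, to a finite subgroup. For the finite step we would use Smith theory with $\mathbb{F}_p$ coefficients for cyclic $p$-groups, or equivalently pass to the closure of a generic one-parameter subgroup whose fixed set is the same. This requires care with torsion and with the choice of coefficients, and it is exactly the part where the argument of \cite[Lemma 2.2]{MasPan} must be followed closely. In the application to Proposition~\ref{almostTh}, however, only the subtorus case with rational coefficients should be needed, and there the argument above is complete.
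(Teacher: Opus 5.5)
\textbf{Subtorus case.} When $H$ is a subtorus, your argument is correct over $\mathbb{Q}$ for compact $X$. The chain $b(X^T)\le b(X^H)\le b(X)$, the criterion that $X$ is equivariantly formal iff $b(X^T)=b(X)$, the splitting over the $T$-stable components of $X^H$, and $\chi(X^H)=\chi(X)$ together give all three conclusions. The paper gives no proof of its own and only cites \cite[Lemma 2.2]{MasPan}. Proposition~\ref{almostTh} uses the lemma only for the diagonal circle of Proposition~\ref{my}, so your argument covers the case the paper needs.

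\textbf{The gap: non-connected $H$.} For the lemma as stated, with an arbitrary closed $H$, this case is left open and your proposed shortcut fails. The closure of a one-parameter subgroup is a subtorus, but $X^F$ for a finite $F$ is in general not the fixed set of any subtorus. More seriously, no rational argument can work here, because without extra hypotheses on $X$ the statement is false.

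\emph{Counterexample.} Let $T=S^1$ rotate $S^2$ about an axis. This commutes with the antipodal map, so $T$ acts on $X=\mathbb{R}P^2$. Take $F=\{\pm 1\}\subset S^1$, i.e.\ rotation by $\pi$.
\begin{itemize}
\item $X^T$ is a single point, and $X^F=\mathrm{pt}\sqcup\mathbb{R}P^1$.
\item $T$ acts transitively on the component $Y=\mathbb{R}P^1$, with isotropy $F$.
\item $H^{odd}(X;\mathbb{Z})=H^{odd}(X;\mathbb{Q})=0$, so $X$ is equivariantly formal.
\item Yet $Y^T=\emptyset$, $H^1(Y)\neq 0$, and $Y$ is not equivariantly formal.
\item The only subtori of $S^1$ have fixed sets $X$ and a point, so $X^F$ is neither.
\end{itemize}

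\emph{What survives and what is missing.} For every closed $H$, $X^H$ is a compact $T$-space with $(X^H)^T=X^T$. Hence two of your ingredients still hold: $b(X^T)\le b(X^H)$ and $\chi(X^H)=\chi(X^T)=\chi(X)$. The missing piece is the upper bound $b(X^H)\le b(X)$. For finite $H$ this comes only from Smith theory with $\mathbb{F}_p$ coefficients, and it needs torsion control on $X$. A sufficient hypothesis is that $H^*(X;\mathbb{Z})$ is torsion-free; this is automatic for a closed orientable manifold with $H^{odd}(X;\mathbb{Z})=0$, by Poincar\'e duality. For a $p$-group $F$ this gives
$b(X^F)\le b(X^F;\mathbb{F}_p)\le b(X;\mathbb{F}_p)=b(X)=b(X^T)\le b(X^F)$,
after which your Euler characteristic step concludes. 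Your proposal supplies neither this hypothesis nor this mod $p$ step.
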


\begin{con}
Let us consider the manifold $M_{\Gamma, \lambda}$. Let us also obtain a manifold $M'_{\Gamma, \lambda}$ by multiplying each matrix from $M_{\Gamma, \lambda}$ by $i$. It is well-known that by multiplying Hermitian matrix by $i$ we get a skew-Hermitian matrix. Since all coefficient of the matrix are multiplied by $i$, zero blocks are preserved, $T$-action is also preserved. Every eigenvalue of matrix is also multiplied by $i$, thus $M'_{\Gamma, \lambda}$ is also a set of isospectral matrices.

Now let us consider $M'_{\Gamma, \lambda}$ as a manifold of matrices in a real basis, i.e. obtain $M'_{\Gamma, \lambda, \mathbb{R}}$. This is a manifold of matrices of size $2n \times 2n$. Each complex entry $a + bi$ of $M \in M'_{\Gamma, \lambda}$ corresponds to the real block 
$\begin{bmatrix}
    a & -b\\
    b & a
\end{bmatrix}$
of $Q \in M'_{\Gamma, \lambda, \mathbb{R}}$.
\end{con}

\begin{ex}
$$
\begin{pmatrix}
    k & l_1 + i\cdot l_2 & m_1 + i\cdot m_2 \\ 
    l_1 - i\cdot l_2 & n & o_1 + i\cdot o_2 \\
    m_1 - i\cdot m_2 & o_1 - i\cdot o_2 & p
\end{pmatrix}
\xrightarrow[]{M_{\Gamma, \lambda}\mapsto M'_{\Gamma, \lambda}} \\
$$

$$
\xrightarrow[]{}
\begin{pmatrix}
    i \cdot k & i \cdot l_1 - l_2 & i \cdot m_1 - m_2 \\ 
    i\cdot l_1 + l_2 & i \cdot n & i \cdot o_1 - o_2 \\
    i \cdot m_1 + m_2 & i \cdot o_1 + o_2 & i \cdot p
\end{pmatrix}
\xrightarrow[]{M'_{\Gamma, \lambda} \mapsto M'_{\Gamma, \lambda, \mathbb{R}}}
$$

$$
\xrightarrow[]{}
\begin{pmatrix}
\begin{bmatrix}
0 & -k\\
k & 0
\end{bmatrix} 
& 
\begin{bmatrix}
-l_2 & -l_1\\
l_1 & -l_2
\end{bmatrix}
& 
\begin{bmatrix}
-m_2 & -m_1\\
m_1 & -m_2
\end{bmatrix}
\\
\begin{bmatrix}
l_2 & -l_1\\
l_1 & l_2
\end{bmatrix}
& 
\begin{bmatrix}
0 & -n\\
n & 0
\end{bmatrix} 
& 
\begin{bmatrix}
-o_2 & -o_1\\
o_1 & -o_2
\end{bmatrix}
\\
\begin{bmatrix}
m_2 & -m_1\\
m_1 & m_2
\end{bmatrix}
& 
\begin{bmatrix}
o_2 & -o_1\\
o_1 & o_2
\end{bmatrix}
&
\begin{bmatrix}
0 & -p\\
p & 0
\end{bmatrix}
\end{pmatrix}
$$

\end{ex}

\begin{prop}
    $M'_{\Gamma, \lambda, \mathbb{R}} \subset Q_{\Gamma, \lambda}^{+}$ (or $M'_{\Gamma, \lambda, \mathbb{R}} \subset Q_{\Gamma, \lambda}^{-}$).
\end{prop}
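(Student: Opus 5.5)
The plan is to treat the realification as a ring homomorphism and then use the invariance of the Pfaffian under $SO(2n)$-conjugation. Let $\rho\colon M_n(\mathbb{C})\to M_{2n}(\mathbb{R})$ be the map sending each entry $a+bi$ to the block $\begin{bmatrix} a & -b\\ b & a\end{bmatrix}$, so that $M'_{\Gamma,\lambda,\mathbb{R}}=\rho(i\cdot M_{\Gamma,\lambda})$. We would show two things: that $\rho(iH)\in Q_{\Gamma,\lambda}$ for every $H\in M_{\Gamma,\lambda}$, and that $\pf(\rho(iH))$ takes one and the same value for all such $H$. The second point places the whole image in a single component, $Q^+_{\Gamma,\lambda}$ or $Q^-_{\Gamma,\lambda}$.

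\emph{Step 1: algebraic properties of $\rho$.} The map $\rho$ is an injective homomorphism of $\mathbb{R}$-algebras. This follows because $a+bi\mapsto\begin{bmatrix} a & -b\\ b & a\end{bmatrix}$ is a ring embedding $\mathbb{C}\hookrightarrow M_2(\mathbb{R})$, and $\rho$ applies it entrywise to block matrices. Conjugation $a+bi\mapsto a-bi$ corresponds to transposing the $2\times 2$ block. Hence $\rho(A^*)=\rho(A)^T$.

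For $H$ Hermitian, $iH$ is skew-Hermitian, so $\rho(iH)^T=\rho((iH)^*)=-\rho(iH)$, i.e.\ $\rho(iH)$ is skew-symmetric. A zero entry $h_{kl}$ gives a zero block in position $(k,l)$. Therefore $\Gamma$-shapedness of $H$ is exactly $\Gamma$-shapedness of $\rho(iH)$ in the sense of Construction~\ref{def_q}.

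\emph{Step 2: spectrum.} Diagonalize $H=UDU^*$ with $U\in U(n)$ and $D=\mathrm{diag}(\lambda_{\sigma(1)},\ldots,\lambda_{\sigma(n)})$. By Step~1,
\[
\rho(iH)=\rho(U)\,\rho(iD)\,\rho(U)^T ,
\]
and $\rho(U)$ is orthogonal. Moreover $\det\rho(U)=|\det U|^2=1$, which can also be seen from connectedness of $U(n)$, so $\rho(U)\in SO(2n)$.

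The matrix $\rho(iD)$ is block diagonal with blocks $\begin{bmatrix} 0 & -\lambda_{\sigma(k)}\\ \lambda_{\sigma(k)} & 0\end{bmatrix}$. Its eigenvalues are $\pm i\lambda_1,\ldots,\pm i\lambda_n$, and conjugation preserves them. Hence $\rho(iH)\in Q_{\Gamma,\lambda}$.

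\emph{Step 3: constant Pfaffian.} Use $\pf(B^TAB)=\det(B)\pf(A)$, as in the proof that $Q_{\Gamma,\lambda}$ splits into two components. Taking $B=\rho(U)^T$, which has determinant $1$, gives
\[
\pf(\rho(iH))=\pf(\rho(iD))=\prod_{k=1}^n(-\lambda_{\sigma(k)})=(-1)^n\lambda_1\cdots\lambda_n .
\]
This quantity is independent of $H$ and of $\sigma$, and it is nonzero because $\lambda$ is generic. So every matrix of $M'_{\Gamma,\lambda,\mathbb{R}}$ satisfies the same sign condition $\sqrt{\det}=\pm\pf$. The whole set therefore lies in $Q^{+}_{\Gamma,\lambda}$ or in $Q^{-}_{\Gamma,\lambda}$, according to the sign of $(-1)^n\lambda_1\cdots\lambda_n$.

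This route avoids having to prove that $M_{\Gamma,\lambda}$ is connected. The only delicate point is the sign bookkeeping: the convention for $\pf$ of a $2\times2$ block, and checking that $\rho(U)$ really lies in $SO(2n)$ rather than merely $O(2n)$. I expect Step~3 to be the main check, though it is routine once $\det\rho(U)=|\det U|^2$ is established, e.g.\ via the identity $\det\rho(A)=|\det A|^2$ on diagonalizable matrices and density.
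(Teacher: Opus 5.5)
Your proof is correct, and it is more complete than the paper's. Both arguments rest on the same key fact, $\pf(B^TAB)=\det(B)\pf(A)$, but they apply it to different groups.

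The paper applies it only to the torus $T^n=SO(2)\times\cdots\times SO(2)$. That shows only that each $T^n$-orbit in $M'_{\Gamma,\lambda,\mathbb{R}}$ has constant Pfaffian sign. To conclude that the whole set lies in one of $Q^{\pm}_{\Gamma,\lambda}$, it would still need $M_{\Gamma,\lambda}$ to be connected, which it neither proves nor cites.

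You apply the invariance to the realified diagonalizing unitary $\rho(U)\in SO(2n)$ instead. This joins every point to a diagonal one and gives the explicit constant $\pf(\rho(iH))=(-1)^n\lambda_1\cdots\lambda_n$. It bypasses connectedness entirely and also tells you which of $Q^{\pm}_{\Gamma,\lambda}$ you land in, once the $2\times 2$ Pfaffian sign convention is fixed.

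Your Step~1, showing that $\rho$ is an injective $*$-homomorphism, also supplies the check of $M'_{\Gamma,\lambda,\mathbb{R}}\subset Q_{\Gamma,\lambda}$ (skew-symmetry, $\Gamma$-shape, spectrum), which the paper simply asserts. For the remaining sign check, $\det\rho(U)=1$, your continuity-plus-connectedness-of-$U(n)$ argument is the quickest.
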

\begin{proof}
$M'_{\Gamma, \lambda, \mathbb{R}}$ consists of isospectral skew-symmetric matrices with zero-blocks corresponding to graph $\Gamma$. Hence $M'_{\Gamma, \lambda, \mathbb{R}} \subset Q_{\Gamma, \lambda}$.

Moreover, Pfaffian is invariant under a proper orthogonal change of basis. $T^n = SO(2) \times \ldots \times SO(2)$ acts on $M'_{\Gamma, \lambda, \mathbb{R}}$ as an orthogonal change of basis. Thus all of the matrices $M \in M'_{\Gamma, \lambda, \mathbb{R}}$ belong to the same connected component of $Q_{\Gamma, \lambda}$. 

Without loss of generality, we can assume that $M'_{\Gamma, \lambda, \mathbb{R}} \subset Q_{\Gamma, \lambda}^{+}$.
\end{proof}

\begin{prop}\label{my}
    $M'_{\Gamma, \lambda, \mathbb{R}} = (Q_{\Gamma, \lambda}^{+}) ^ {T^{1}}$.
\end{prop}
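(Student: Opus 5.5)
We take $T^1\subset T^n$ to be the diagonal circle $\{(t,t,\ldots,t)\}$, which acts on $\mathbb{R}^{2n}$ by the same rotation $R_\phi=\begin{bmatrix}\cos\phi & -\sin\phi\\ \sin\phi & \cos\phi\end{bmatrix}$ in every $2\times 2$ block. This is multiplication by the unit complex number $e^{i\phi}$ under $\mathbb{C}^n\cong\mathbb{R}^{2n}$, and it commutes with the realification used in the Construction of $M'_{\Gamma,\lambda,\mathbb{R}}$. The plan is to prove the two inclusions separately. Throughout, $M'_{\Gamma,\lambda,\mathbb{R}}$ is read as in the Construction: realified skew-Hermitian $\Gamma$-shaped matrices that are isospectral as real $2n\times 2n$ matrices, with spectrum $\{\pm i\lambda_1,\ldots,\pm i\lambda_n\}$, lying in the component $Q^{+}_{\Gamma,\lambda}$ fixed in the preceding Proposition.

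\emph{Inclusion $\subseteq$.} Every block $\begin{bmatrix} a & -b\\ b & a\end{bmatrix}$ of a matrix in $M'_{\Gamma,\lambda,\mathbb{R}}$ commutes with $R_\phi$, because it is itself $a I+bJ$ with $J=R_{\pi/2}$. Hence conjugation by $\mathrm{diag}(R_\phi,\ldots,R_\phi)$ fixes the matrix. Together with the previous Proposition, which gives $M'_{\Gamma,\lambda,\mathbb{R}}\subset Q^{+}_{\Gamma,\lambda}$, this yields $M'_{\Gamma,\lambda,\mathbb{R}}\subseteq (Q^{+}_{\Gamma,\lambda})^{T^1}$.

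\emph{Inclusion $\supseteq$.} Let $A\in Q^{+}_{\Gamma,\lambda}$ be $T^1$-fixed.
\begin{enumerate}
\item Write the block $(k,l)$ of $A$ as $\begin{bmatrix} p&q\\ r&s\end{bmatrix}$. Applying system~\eqref{eq} with $\theta=\phi$, the terms in $\phi-\theta$ are constant. Invariance for all $\phi$ then forces the coefficients of $\cos 2\phi$ and $\sin 2\phi$ to vanish, namely $p-s=0$ and $q+r=0$. So every block has the form $\begin{bmatrix} a&-b\\ b&a\end{bmatrix}$.
\item Each diagonal block is skew-symmetric and of this form, so it equals $\begin{bmatrix}0&-k\\k&0\end{bmatrix}$.
\item Therefore $A$ is the realification of a complex $n\times n$ matrix $C$. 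Since $A^T=-A$ and transposition of realified matrices corresponds to conjugate transposition, $C$ is skew-Hermitian. Since $A$ is $\Gamma$-shaped, $C$ is $\Gamma$-shaped. Hence $C=iH$ with $H\in M_\Gamma$.
\item The real spectrum of $A$ is $\{\pm i\lambda_k\}$, and $A\in Q^{+}_{\Gamma,\lambda}$. So $A$ satisfies exactly the defining conditions of $M'_{\Gamma,\lambda,\mathbb{R}}$ as read above, giving $A\in M'_{\Gamma,\lambda,\mathbb{R}}$.
\end{enumerate}

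The main obstacle is this last identification of spectra. The real spectrum of the realification of $iH$ consists of the eigenvalues $i\mu_k$ of $iH$ together with their conjugates $-i\mu_k$. The condition on $A$ therefore only pins down $\mu_k=\pm\lambda_k$ up to a sign pattern, whereas a naive reading of ``$i\cdot M_{\Gamma,\lambda}$'' would fix $\mu_k=\lambda_k$ exactly. To handle this, we would use the Pfaffian. For $C=iH$ one has $\pf(A)=\pm\prod_k\mu_k$, with a sign depending only on $n$. Requiring $A\in Q^{+}_{\Gamma,\lambda}$ therefore constrains the parity of the sign pattern $(\mu_k/\lambda_k)_k$. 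The step we would verify carefully is that this is precisely the condition built into $M'_{\Gamma,\lambda,\mathbb{R}}$ as a subset of $Q^+_{\Gamma,\lambda}$, rather than a smaller one. If it is, the two inclusions close up and the equality holds as stated.
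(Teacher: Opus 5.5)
\textbf{Where it fails.} Your steps 1--3 and the $\subseteq$ direction are correct. The gap is the identification of spectra that you defer to the end, and that check fails.

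In the Construction, $M'_{\Gamma,\lambda,\mathbb{R}}$ is the realification of $i\cdot M_{\Gamma,\lambda}$, so the Hermitian matrix $H$ must have spectrum exactly $\{\lambda_1,\ldots,\lambda_n\}$. This is what makes $M'_{\Gamma,\lambda,\mathbb{R}}\cong M_{\Gamma,\lambda}$, which is why the proposition is used in Proposition~\ref{almostTh}. Your opening ``reading'' of $M'$ (real spectrum $\{\pm i\lambda_k\}$ plus membership in $Q^+_{\Gamma,\lambda}$) makes $\supseteq$ tautological, but it replaces the object.

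With the actual definition, $\pf(A)=(-1)^n\det H$ only fixes $\prod_k\epsilon_k$, where $\mathrm{spec}(H)=\{\epsilon_1\lambda_1,\ldots,\epsilon_n\lambda_n\}$. So $2^{n-1}$ sign patterns survive, and $\supseteq$ is false for $n\ge 2$.

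A quick witness uses the torus-fixed points. Every $A_{\sigma,s}$ is block diagonal, hence is the realification of $i\,\mathrm{diag}(h_1,\ldots,h_n)$ with $h_k=-(-1)^{s_k}\lambda_{\sigma(k)}$, and is fixed by $T^1$. The component containing $M'_{\Gamma,\lambda,\mathbb{R}}$ contains $2^{n-1}n!$ of these points. However, $M'_{\Gamma,\lambda,\mathbb{R}}\cong M_{\Gamma,\lambda}$ contains only the $n!$ points with $s=(1,\ldots,1)$. For $\Gamma=K_2$, the fixed set $(Q^+_{K_2,\lambda})^{T^1}$ is two disjoint spheres, $M'_{K_2,\lambda,\mathbb{R}}$ and $M'_{K_2,-\lambda,\mathbb{R}}$.

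\textbf{What your argument does prove.} Steps 1--3 together with the Pfaffian computation establish the corrected statement
\[
(Q^{+}_{\Gamma,\lambda})^{T^1}=\bigsqcup_{\epsilon} M'_{\Gamma,\epsilon\lambda,\mathbb{R}},\qquad \epsilon\lambda=(\epsilon_1\lambda_1,\ldots,\epsilon_n\lambda_n),
\]
where $\epsilon$ ranges over the sign vectors in $\{\pm1\}^n$ with $\prod_k\epsilon_k$ fixed by the Pfaffian. Each piece is open and closed, because $\mathrm{spec}(H)$ varies continuously in a finite set. Hence $M'_{\Gamma,\lambda,\mathbb{R}}$ is a union of connected components of the fixed set, which is all Lemma~\ref{Pan} needs.

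The paper's own proof has the same hole. It concludes $\supseteq$ as soon as every block has the form $aI+bJ$ and never examines $\mathrm{spec}(H)$. So you located the real issue, but you should not conclude that the equality ``holds as stated''; prove the weaker component statement instead.
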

\begin{proof}
    The condition that determines subset $M'_{\Gamma, \lambda, \mathbb{R}}$ in $Q_{\Gamma, \lambda}^{+}$ is that every block is of the form 
    $\begin{bmatrix}
    a & b\\
    -b & a
\end{bmatrix}$. 
In order to find a group $H$ such that $M'_{\Gamma, \lambda, \mathbb{R}} = (Q_{\Gamma, \lambda}^{+}) ^ {H}$ we have to consider elements $t\in T$ that preserve such blocks.

Again using system~\ref{eq} and considering $a = d$, $b = -c$, we get 
\begin{equation}
    \begin{cases}
    2a \cdot \cos(\phi - \theta) + 2b \cdot \sin(\phi - \theta) = 2e\\
    -2a \cdot \sin(\phi - \theta) + 2b \cdot \cos(\phi - \theta) = 2f\\
    2a \cdot \sin(\phi - \theta) - 2b \cdot \cos(\phi - \theta) = 2g \\
    2a \cdot \cos(\phi - \theta) + 2b \cdot \sin(\phi - \theta) = 2h
    \end{cases}
\end{equation}

Moreover, we know that $e = h = a$ and $f = -g = b$, so we have

\begin{equation}
    \begin{cases}
    a \cdot \cos(\phi - \theta) + b \cdot \sin(\phi - \theta) = a\\
    -a \cdot \sin(\phi - \theta) + b \cdot \cos(\phi - \theta) = b\\
    \end{cases}
\end{equation}

Thus

\begin{equation} \label{last_system}
    \begin{cases}
    a \cdot (\cos(\phi - \theta) - 1) = - b \cdot \sin(\phi - \theta)\\
    a \cdot \sin(\phi - \theta) = b \cdot (\cos(\phi - \theta) - 1) \\
    \end{cases}
\end{equation}

If $ \sin(\phi - \theta) \not = 0$, then we get
$$
    a = b \cdot \frac{(\cos(\phi - \theta) - 1)}{\sin(\phi - \theta)}
$$

and 
$$
b \cdot \frac{(\cos(\phi - \theta) - 1)}{\sin(\phi - \theta)} \cdot (\cos(\phi - \theta) - 1) = -b \cdot \sin(\phi - \theta)
$$

We can say that $b \not = 0$. Otherwise we have $b = 0 \implies a \cdot \sin(\phi - \theta) = 0$. Then $\sin(\phi - \theta) = 0$ or $a = 0$. First is a contradiction and second is an irrelevant case of a 0 block. Hence
$$
    \cos^2(\phi - \theta) - 2\cdot \cos(\phi - \theta) + 1 = -\sin^2(\phi - \theta)
$$
so finally we get
$$
    2 = 2\cdot \cos(\phi - \theta)
$$
and $\cos(\phi - \theta) = 1 \implies \sin(\phi - \theta) = 0$. Which is a contradiction.

On the other hand, if $\sin(\phi - \theta) = 0$, then $\cos(\phi - \theta) = 1$, and both equalities of the system~\ref{last_system} hold. This means that the condition that preserves blocks of the form $\begin{bmatrix}
    a & b\\
    -b & a
\end{bmatrix}$ is that $\phi = \theta$. This equations must hold for every block of the matrix $M \in M'_{\Gamma, \lambda, \mathbb{R}}$. Since for edge $\{i, j\} \in E_{\Gamma}$ there is a corresponding block $B$ and two components $T_i^{1}$ and $T_j^{1}$ such that $T^{n}$ acts on $B$ by $T_i^{1} B T_j^{1}$, we conclude that $\phi = \theta$ for every corresponding edge of $E_{\Gamma}$. By definition $\Gamma$ is connected, so for every two components $T_i^{1}$, $T_j^{1}$ we have $\phi = \theta$. Hence $M'_{\Gamma, \lambda, \mathbb{R}} \subset Q_{\Gamma, \lambda} ^ {T^{1}}$ and $M'_{\Gamma, \lambda, \mathbb{R}} \subset (Q_{\Gamma, \lambda}^{+}) ^ {T^{1}}$. \par
    The inclusion in the other direction can be proved by applying $\phi = \theta$ to system~\ref{eq}. Since $(Q_{\Gamma, \lambda}^{+}) ^ {T^{1}}$ consist of invariant points we also have $a = e$, $b = f$, $c = g$, $d = h$:
    \begin{equation}
    \begin{cases}
    (a-d) \cdot \cos(2\phi) + (a + d) -(b+c) \cdot \sin(2\phi) = 2a\\
    (a-d) \cdot \sin(2\phi) + (b+c)\cdot \cos(2\phi) +  (b-c) = 2b\\
    (a-d) \cdot \sin(2\phi) + (b+c)\cdot \cos(2\phi) -  (b-c) = 2c \\
    -(a-d) \cdot \cos(2\phi) + (a + d) + (b+c) \cdot \sin(2\phi) = 2d
    \end{cases}
\end{equation}
    If $\cos(2\phi) = -1$ then $\sin(2\phi) = 0$ and we have
    \begin{equation}
    \begin{cases}
    2d = 2a\\
    -2c = 2b\\
    -2b = 2c \\
    2a = 2d
    \end{cases}
\end{equation}
Hence $a = d$ and $b = -c$ for every block. So  $M'_{\Gamma, \lambda, \mathbb{R}} \supset (Q_{\Gamma, \lambda}^{+}) ^ {T^{1}}$. This concludes the proof of proposition.
\end{proof}

Now as we know from Proposition~\ref{my} that $M'_{\Gamma, \lambda, \mathbb{R}} = (Q_{\Gamma, \lambda}^{+}) ^ {T^{1}}$, we can apply Lemma~\ref{Pan}. The Proposition~\ref{almostTh} follows straightforward.

It is now sufficient to prove the next Proposition~\ref{Thoda}.
\begin{prop}\label{Thoda}
    If $M_{\Gamma, \lambda}$ is equivariantly formal, then $Q_{\Gamma, \lambda}$ is also equivariantly formal.
\end{prop}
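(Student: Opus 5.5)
The plan is to use the standard criterion that a compact $T$-manifold $X$ with finite fixed point set is equivariantly formal (over $\mathbb{Q}$) if and only if $H^{odd}(X)=0$. Equivalently, it suffices that $\dim H^*(X;\mathbb{Q}) = |X^T|$, since the localization inequality $\dim H^*(X)\geqslant \dim H^*(X^T)$ is an equality exactly in the equivariantly formal case. So we need to show $\dim H^*(Q^{+}_{\G,\lambda})= 2^{n-1}n!$, knowing $\dim H^*(M_{\G,\lambda}) = n!$. By the diffeomorphism $Q^+\cong Q^-$ it is enough to treat $Q^+_{\G,\lambda}$.

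The key step is to compare the two spaces through the subtorus $T^1$ of Proposition~\ref{my}. Consider the circle $S=\{(t,\ldots,t)\}$ acting on $Q^+_{\G,\lambda}$. Its fixed set $(Q^+_{\G,\lambda})^{S}$ is not only $M'_{\Gamma,\lambda,\mathbb{R}}$. Each component is a copy of a Hermitian-type manifold obtained by twisting by a sign vector $s$: conjugation by diagonal reflections $\mathrm{diag}(\pm1\text{ blocks})$ intertwines the $S$-action with the action of the antidiagonal-type circles, exactly as the weights $\epsilon_i\pm\epsilon_j$ in Proposition~\ref{last} suggest. Concretely, for each $s\in\mathbb{Z}_2^n$ with even parity (modulo the global sign) consider the circle $S_s=\{(t^{(-1)^{s_1}},\ldots,t^{(-1)^{s_n}})\}$. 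Its fixed component containing $A_{\sigma,s}$ is the image of $M'_{\Gamma,\lambda,\mathbb{R}}$ under conjugation by the reflection $R_s$, hence diffeomorphic to $M_{\G,\lambda}$, and it contains $n!$ fixed points. These components are $2^{n-1}$ copies of $M_{\G,\lambda}$ which partition $(Q^+)^T$. This is read off from the GKM graph in Proposition~\ref{last} and the corollary after it: the edges with weight $\epsilon_i-\epsilon_j$ and fixed $s$ form subgraphs isomorphic to the GKM graph of Proposition~\ref{Her}.

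Next I would build a $T$-invariant Morse–Bott (or Białynicki–Birula-type) decomposition of $Q^+_{\G,\lambda}$. Take a generic one-parameter subgroup $\xi$ close to the diagonal circle $S$, and a moment-type function $f(A)=\mathrm{tr}(A\,D_\xi)$, where $D_\xi$ is the block-diagonal matrix with blocks $\xi_k J$. Its critical set for the diagonal circle is $(Q^+)^S$. If $f$ is $S$-invariant Morse–Bott with critical manifolds the copies of $M_{\G,\lambda}$, and the negative normal bundles carry $S$-invariant complex structures (weights nonzero for $S$, so they are orientable with even index), then the Morse–Bott inequalities become equalities with rational coefficients. Hence $P_{Q^+}(t)=\sum_s t^{\mathrm{ind}_s}P_{M_{\G,\lambda}}(t)$. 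If $M_{\G,\lambda}$ is equivariantly formal, $H^{odd}(M_{\G,\lambda})=0$, so all contributions are even and perfectness is automatic by lacunarity. This gives $H^{odd}(Q^+)=0$ and therefore equivariant formality. The same decomposition shows $\dim H^*(Q^+)=2^{n-1}n!=|(Q^+)^T|$, consistent with the criterion.

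The main obstacle is the Morse–Bott step. We must verify that $f$ restricted to $Q^+_{\G,\lambda}$ is Morse–Bott with critical set exactly $(Q^+)^S$, i.e.\ that the fixed set of the circle is a smooth union of components, which is automatic for compact group actions. We also need the critical set of $f$ to equal this fixed set, which requires $f$ to be a genuine moment map for $S$ with respect to an $S$-invariant symplectic-like or at least Kirwan-type structure. I would first try to realize $Q^+_{\G,\lambda}$ as an $S$-invariant submanifold of the coadjoint orbit $SO(2n)\cdot\lambda$, whose Kähler form restricts nondegenerately at least near fixed points. Failing that, I would use the equivariant Morse theory argument directly with $S$: by Lemma~\ref{Pan}-style localization, $\dim H^*(X)\le \dim H^*(X^S)$ always, and equality with $\sum\dim H^*$ of the components follows once $H^*_S(X)$ is shown to be free via the even-degree spectral sequence argument. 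Identifying all components of $(Q^+)^S$ as copies of $M_{\G,\lambda}$ through the $\epsilon_i\pm\epsilon_j$ weight analysis is the secondary technical point.
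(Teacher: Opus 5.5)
\paragraph{The gap.} The gap is the step you yourself flag as the main obstacle, and it cannot be filled.

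First, a correction to your identification of the critical manifolds. For the diagonal circle $S$, the components of $(Q^+_{\G,\lambda})^S$ are the realifications of $i\,M_{\G,\delta\lambda}$, where $\delta\lambda=((-1)^{\delta_1}\lambda_1,\ldots,(-1)^{\delta_n}\lambda_n)$ and $\delta\in\mathbb{Z}_2^n$ has the appropriate parity. The signs are attached to eigenvalues, not to positions. So these components are not your $R_sM'_{\G,\lambda,\mathbb{R}}$, which are fixed components of the different circles $S_s$. At a $T$-fixed point of such a component $F_\delta$, the normal space is $\bigoplus_{\{i,j\}\in E_\G}U^+_{ij}$. The invariant sphere tangent to $U^+_{ij}$ ends in the component where the two eigenvalues in positions $i,j$ are additionally negated.

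Now take $\G=P_3$ (the path $1-2-3$), where $(Q^+)^S$ has four components. At $A_{\sigma,s}\in F_\delta$ the two normal spheres lead to the components obtained by negating $\{\lambda_{\sigma(1)},\lambda_{\sigma(2)}\}$ and $\{\lambda_{\sigma(2)},\lambda_{\sigma(3)}\}$ respectively. As $\sigma$ varies, these pairs run through all pairs of the other three components.

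Your $f(A)=\mathrm{tr}(AD_\xi)$ is $T$-invariant. Suppose a $T$-invariant Morse--Bott function had critical set exactly $(Q^+)^S$.
\begin{itemize}
\item Its gradient for a $T$-invariant metric is tangent to each such sphere (a fixed component of a codimension-one subtorus) and vanishes there only at the poles.
\item So $f$ is strictly monotone on each normal sphere, and the four critical values are pairwise distinct.
\item The index of $F_\delta$ at $A_{\sigma,s}$ is then twice the number of those two neighbouring components with value below $f(F_\delta)$.
\end{itemize}
Constancy of the index along $F_\delta$ forces every critical value to be the maximum or the minimum of the four, which is absurd. Hence $f|_{Q^+}$ has critical points off $(Q^+)^S$ (or degenerate ones), and $P_{Q^+}(t)=\sum t^{\mathrm{ind}}P_{M}(t)$ is not available.

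Your fallback does not help either. Localization gives $\dim H^*(X^S)\le\dim H^*(X)$, which is the opposite of what you need. The reverse inequality is exactly what a perfect Morse--Bott function would have supplied.

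\paragraph{The statement is false.} In fact the statement fails for $\G=P_3$, so no argument can succeed. The paper's Toda-flow sketch does not supply such a function either: it never uses the hypothesis, and if valid it would make every $Q_{\G,\lambda}$ formal, contradicting Proposition~\ref{almostTh}.

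Under $\mathfrak{so}(6)\cong\mathfrak{su}(4)$, matching maximal tori and root spaces, the roots $\pm\epsilon_1\pm\epsilon_3$ of the deleted block $(1,3)$ go to two orthogonal roots of $A_3$. After relabelling, these are the entries $(1,3)$ and $(2,4)$. So $Q^+_{P_3,\lambda}$ is equivariantly diffeomorphic to the manifold $M_{C_4,\mu}$ of isospectral $4\times4$ periodic tridiagonal Hermitian matrices.

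$M_{C_4,\mu}$ is not equivariantly formal. Its GKM ring for $T^4$ has $\dim\mathcal{H}^2=7$ and $\dim\mathcal{H}^4=35$. Formality with $24$ fixed points, together with Poincar\'e duality, would force $b_2=3$ and $b_4=16$, hence $\dim H^4_T=10+4\cdot3+16=38$. By contrast, $M_{P_3,\lambda}$, a toric $4$-manifold with $6$ fixed points, is equivariantly formal.
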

\begin{proof}
    The proof can be done analogously to the technique presented in \cite{Toda}. We can define a generalized Toda flow over the complexification of a Lie algebra of type D. With the same reasoning one can prove that this Toda flow is a gradient flow. All of the critical points of this flow have even indices. The Morse complex is concentrated in even degrees; thus, its diﬀerential vanishes. Hence the odd cohomology of $M_{\Gamma, \lambda}$ vanishes, and $M_{\Gamma, \lambda}$ is equivariantly formal. The detailed proof for Lie algebra of type A can be found in \cite{A-B}.
\end{proof}

Theorem~\ref{main} follows from Propositions~\ref{almostTh} and \ref{Thoda}.

\begin{rem}
    The classification of equavariantly formal and non-equavariantly formal manifolds $M_{\Gamma, \lambda}$ can be found in \cite{A2}.
\end{rem}


\begin{thebibliography}{9}

\bibitem{Tridiag} A.\,Ayzenberg, \textit{Space of isospectral periodic tridiagonal matrices}, Algebr. Geom. Topol. 20 (2020), 2957-2994.

\bibitem{A2} A.\,Ayzenberg, K.\,Sorokin, \textit{Topological approach to diagonalization algorithms}, preprint \href{https://arxiv.org/abs/2204.06111}{arXiv: 2204.06111}

\bibitem{AyzMasEquiv} A.\,Ayzenberg, M.\,Masuda, \textit{Orbit spaces of equivariantly formal torus actions of complexity one}, Transform. Groups, doi: 10.1007/s00031-023-09822-3 (2023).

\bibitem{Ay1} A.\,Ayzenberg, M.\,Masuda, G.\,Solomadin, \textit{How is a graph not like a manifold?}, Sbornik: Mathematics, 2023, Volume 214, Issue 6, 793–815.

\bibitem{A1} A.\,Ayzenberg, V.\,Buchstaber, \textit{Cluster-permutohedra and submanifolds of flag varieties with torus actions}, Int. Math. Res. Not. IMRN, 2024:3 (2024), 1931–1967.

\bibitem{Arrow} A.\,Ayzenberg, V.\,Buchstaber, \textit{Manifolds of isospectral arrow matrices}, Mat. Sb. 212:5 (2021), 3-36.

\bibitem{A-B} A.\,Ayzenberg, V.\,Buchstaber, \textit{Manifolds of isospectral matrices and Hessenberg varieties}, Int. Math. Res. Not. 2021:21 (2021), 16671-16692.

\bibitem{BB} A.\,Białynicki-Birula. \textit{Some theorems on actions of algebraic groups}, Ann. of Math. (2), 98:480–497, 1973.

\bibitem{Toda} F.\,De\,Mari, M.\,Pedroni, \textit{Toda flows and real Hessenberg manifolds}, J. Geom. Anal. 9, 607–625 (1999).

\bibitem{GKM} M.\,Goresky, R.\,Kottwitz, R.\,MacPherson, \textit{Equivariant cohomology, Koszul duality, and the localization theorem}, Invent. math. 131 (1998), 25--83.

\bibitem{MasPan} M.\,Masuda, T.\,Panov, \textit{On the cohomology of torus manifolds}, Osaka J. Math. 43 (2006), 711--746 (preprint
\href{https://arxiv.org/abs/math/0306100}{arXiv:math/0306100}).

\bibitem{GZ} V.\,Guilleminn, C.\,Zara \textit{One-skeleta, Betti number and equivariant cohomology}, Duke Math. J. 107 (2001), 283--349.

\bibitem{GHZ} V.\,Guilleminn, T.\,Holm, C.\,Zara \textit{A GKM description of the equivariant cohomology ring of a homogeneous space}, J. Algebr. Comb. 23 (2006), 21--41.

\bibitem{Kur} S.\,Kuroki, \textit{Introduction to GKM-theory}, Trends in Mathematics - New Series
11:2 (2009), 111--126.

\bibitem{Hsiang} Wu\,Yi\,Hsiang, \textit{Cohomology Theory of Topological Transformation Groups}, Springer-Verlag, 1975.


\end{thebibliography}
\end{document}